\theoremstyle{definition}
\newtheorem{defi}{Definition}[section] 
\theoremstyle{plain}
\newtheorem{theo}[defi]{Theorem}
\newtheorem{theorem}[defi]{Theorem}
\newtheorem{coro}[defi]{Corollary} 
\newtheorem{lemma}[defi]{Lemma}
\newtheorem{prop}[defi]{Proposition}
\theoremstyle{remark}
\newcommand{\Ss}{\mathbb{S}}
\newcommand{\proj}{\mathbb{P}}
\newcommand{\R}{\mathbb{R}} 
\newcommand{\h}{\mathbb{H}}
\newcommand{\cC}{\mathcal{C}}
\newcommand{\PSL}{\mathrm{PSL}}
\newcommand{\AdS}{\mathrm{AdS}}
\newcommand{\ADS}{\mathbb{A}\mathrm{d}\mathbb{S}}
\newcommand{\SO}{\mathrm{SO}}
\newcommand{\PO}{\mathrm{PO}}
\newcommand{\rO}{\mathrm{O}}
\newcommand{\pscalb}[2]{\langle #1\, |\, #2 \rangle_{n,2} }
\newcommand{\tv}{\rightarrow}
\newcommand{\G}{\Gamma}
\newcommand{\g}{\gamma}
\newcommand{\RP}{\mathbb{RP}}
\DeclareMathOperator{\Id}{Id}
\newcommand{\Hyp}{\mathbb{H}}
\title{ Regularity of limit sets of  $\AdS$ quasi-Fuchsian groups} 
\author{Olivier Glorieux, Daniel Monclair}
\begin{document}
\maketitle

\setcounter{tocdepth}{1}

\begin{abstract}
Limit sets of  $\AdS$ quasi-Fuchsian groups of $\PO(n,2)$ are  always Lipschitz submanifolds. The aim of this article is to show that they are never $\mathcal{C}^1$, except for the case of Fuchsian groups. As a byproduct we show that $\AdS$ quasi-Fuchsian groups that are not Fuchsian are Zariski dense in $\PO(n,2)$. 
\end{abstract}

\section{Introduction}

 The study of various notions of convex cocompact groups in semi-simple Lie groups has gain considerable interest the last decade, thanks to its relation with Anosov representations. A particularly nice setting is for subgroups of $\PO(p,q)$ where the quadratic form helps to construct invariant domains of dicontinuity, see \cite{DGK}. 

In a previous paper, we studied the metric properties of limit sets for such representations \cite{glorieux2017hausdorff} and proved a rigidity result for quasi-Fuchsian representations in $\PO(2,2)$.  Recently Zimmer \cite{Zimmer} showed a $\mathcal{C}^2$ rigidity result for Hitchin representations in $\PSL_n(\R)$ ($\cC^\infty$ rigidity was known from the work of Potrie-Sambarino \cite{potrie2014}). 

In this paper, we study  the $\mathcal{C}^1$ regularity of such a limit set and prove a rigidity result for quasi-Fuchsian subgroups $\PO(n,2)$. They are examples of $\AdS$ convex-cocompact groups, as defined by \cite{DGK}.\\
\indent Given the standard quadratic form $q_{n,2}$ of signature $(n,2)$ on $\R^{n+2}$, we define $\ADS^{n+1}$ as the subset of $\R\proj^{n+1}$ consisting of negative lines for $q_{n,2}$. Its boundary $\partial\ADS^{n+1}$ is the set of $q_{n,2}$-isotropic lines.

The quadratic form $q_{n,2}$ induces a Lorentzian metric of constant sectional curvature $-1$ on $\ADS^{n+1}$.

\begin{defi}\cite{DGK}
A discrete subgroup $\G$ of $G=\PO(n,2) $ is $\AdS$ \emph{convex-cocompact} if it acts properly discontinously and cocompactly on some properly convex closed subset $\cC$ of $\ADS^{n+1}$ with nonempty interior whose ideal boundary $\partial_i \cC := \overline{\cC}\setminus \cC $ does not contain any nontrivial projective segment. 
\end{defi}

Any infinite convex-cocompact group contains proximal elements, ie. elements that have a unique attracting fixed point in $\partial \ADS^{n+1}$. For $\G$ a discrete subgroup of $\PO(n,2)$, the \emph{proximal limit set of $\G$ } is the closure $\Lambda_\G \subset \RP^{n,2}$  of the set of attracting fixed points of proximal elements of $\G$.
Since $\G$ acts properly discontinuously on a convex set $\cC$, the proximal limit set coincides with the ideal boundary of $\cC.$ It is  shown in \cite{DGK} that this notion of limit set coincides with the closure of  orbits in the boundary.

\begin{defi}
A discrete group of $\PO(n,2)$ is $\AdS$ \emph{quasi-Fuchsian} if it is $\AdS$ convex-cocompact and its proximal limit set is homeomorphic to a $n-1$ dimensional sphere. 

If moreover, the group preserves a totally geodesic copy of $\Hyp^n$ in $\ADS^{n+1}$, it is called \emph{$\AdS$-Fuchsian}. 
\end{defi}

The limit set of an $\AdS$-Fuchsian group is a geometric sphere, hence a  $\cC^1$-submanifold of $\partial \ADS$. 
The principal aim of this article is to show that the converse holds:

\begin{theorem}\label{theorem-main} Let $\G\subset \PO(n,2)$ be $\AdS$ quasi-Fuchsian. If $\Lambda_\G$ is a $\mathcal C^1$ submanifold of $\partial\ADS^{n+1}$, then $\G$ is Fuchsian.
\end{theorem}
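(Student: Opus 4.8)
The plan is to linearise the Fuchsian condition. A totally geodesic $\Hyp^n$ inside $\ADS^{n+1}$ is exactly $\ADS^{n+1}\cap\proj(e^\perp)$ for a $q_{n,2}$-negative line $[e]$ (so that $e^\perp$ has signature $(n,1)$), and preserving it amounts to fixing the point $[e]\in\ADS^{n+1}$; its ideal boundary is the round sphere $Q\cap\proj(e^\perp)$, where $Q=\partial\ADS^{n+1}$ denotes the isotropic quadric. So it suffices to produce a $\Gamma$-invariant negative line. The regularity hypothesis enters through a canonical object: on a $\mathcal{C}^1$ spacelike sphere each tangent hyperplane $T_\xi\Lambda_\Gamma\subset T_\xi Q$ has a well-defined timelike orthogonal line for the conformal Lorentzian structure, which yields a continuous $\Gamma$-invariant field of timelike normal directions along $\Lambda_\Gamma$. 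I would first convert this into a spectral constraint on individual elements, then exploit it globally.

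First I must check that the tangent planes are genuinely spacelike. Since $\Lambda_\Gamma$ is achronal and $\mathcal{C}^1$, each $T_\xi\Lambda_\Gamma$ contains no timelike vector, hence is spacelike or degenerate. To rule out the degenerate case I would use that $\Lambda_\Gamma=\partial_i\cC$ bounds the convex set $\cC$ and contains no projective segment: a degenerate tangent plane is tangent to the light cone of $T_\xi Q$ along a photon direction, and convexity of $\cC$ forces the whole photon through $\xi$ to lie in $\partial_i\cC$, a nontrivial projective segment, which is excluded by hypothesis. Hence every $T_\xi\Lambda_\Gamma$ is spacelike and the normal field above is defined.

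The main step is a spectral rigidity read off at fixed points. Let $\gamma\in\Gamma$ be proximal with attracting and repelling fixed points $\xi^+=[v^+]$, $\xi^-=[v^-]$ and top eigenvalue $\lambda_1>1$. Then $\langle v^+,v^-\rangle$ is a hyperbolic plane, its orthogonal $U$ has signature $(n-1,1)$, and $T_{\xi^+}Q\cong v^{+\perp}/\langle v^+\rangle\cong U$ $\gamma$-equivariantly, with $\gamma$ acting through $\gamma|_U\in\rO(n-1,1)$ whose largest eigenvalue modulus equals $|\lambda_2(\gamma)|$, the second largest eigenvalue modulus of $\gamma$. Now $T_{\xi^+}\Lambda_\Gamma$ is a $\gamma$-invariant spacelike $(n-1)$-plane in the $n$-dimensional $U$, i.e.\ a maximal spacelike subspace; its orthogonal in $U$ is a $\gamma$-invariant timelike line, equivalently a fixed point of $\gamma|_U$ acting on $\Hyp^{n-1}$. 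If $|\lambda_2(\gamma)|>1$ then $\gamma|_U$ is loxodromic and fixes no point of $\Hyp^{n-1}$, a contradiction. Therefore $|\lambda_2(\gamma)|=1$ for every proximal $\gamma\in\Gamma$.

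It remains to globalise. The constraint $|\lambda_2(\gamma)|=1$ for all $\gamma$ says that every Jordan projection lies on the edge ray $\{\,\log|\lambda_2|=0\,\}$ of the Weyl chamber of the rank-two group $\PO(n,2)$, so the limit cone of $\Gamma$ in $\mathfrak a$ has empty interior; by Benoist's limit cone theorem $\Gamma$ cannot be Zariski dense, which is precisely the byproduct announced in the abstract. The Zariski closure is then a proper reductive subgroup $H\subsetneq\PO(n,2)$ that is $P_1$-Anosov and whose proximal limit set is the full acausal $(n-1)$-sphere $\Lambda_\Gamma$; I would finish by classifying such $H$ and showing each one preserves a negative line, hence stabilises a copy of $\Hyp^n$, so that $\Gamma$ is Fuchsian. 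I expect the two delicate points to be the exclusion of degenerate tangent planes, which must genuinely use convexity and the no-segment condition rather than mere achronality, and this final classification, where the pointwise constraint $|\lambda_2|\equiv1$ is upgraded to a single $\Gamma$-invariant negative line.
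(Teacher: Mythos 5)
Your middle step is exactly the paper's mechanism: spacelike tangent planes at attracting fixed points force $\lambda_2(\g)=0$ for every proximal $\g$, the Jordan projections then lie on a wall of the Weyl chamber, and Benoist's theorem rules out Zariski density. That part is correct. However, both ends of your argument have genuine gaps. The first is your exclusion of degenerate tangent planes. You claim that if $T_\xi\Lambda_\G$ is degenerate, then convexity of $\cC$ and the no-segment condition force the whole photon through $\xi$ to lie in $\partial_i\cC$. This implication is false: tangency is an infinitesimal condition, and neither convexity nor the absence of segments upgrades it to containment of the photon. Concretely, take a $\mathcal C^1$ distance-decreasing map $f:\Ss^{n-1}\to\Ss^1$ whose differential has norm $1$ at a single point (modelled on $t\mapsto t-t^3/3$ in one coordinate, and flat far away); its graph is an acausal $\mathcal C^1$ sphere, it contains no projective segment, and it is the ideal boundary of its own convex hull, yet the tangent plane at that point is degenerate. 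Every hypothesis your argument invokes is satisfied while your conclusion fails, so the group action is indispensable at this step. The paper's route (Lemma~\ref{lem - point spacelike} and Corollary~\ref{coro - partout spacelike}) is different: since $f$ is distance-decreasing it is not onto, hence can be viewed as a real-valued function; at a point where $f$ attains its maximum one has $df=0$, so the tangent plane there is spacelike; and since spacelikeness of the tangent plane is an open condition and the action of $\G$ on $\Lambda_\G$ is minimal, it holds at every point.

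The second gap is the endgame. Once Benoist's theorem shows $\G$ is not Zariski dense, you only say you ``would finish by classifying such $H$ and showing each one preserves a negative line''. That classification is not a routine afterthought; it is precisely Proposition~\ref{prop - fuchsien ou zariski dense intro}, whose proof occupies a full section of the paper. Moreover, your assertion that the Zariski closure is reductive is itself unproven: a priori it could preserve a degenerate subspace. The paper handles this in Lemma~\ref{lem - fuchsien ou irreductible}: an invariant isotropic subspace would produce a global fixed point in $\partial\ADS^{n+1}$ (for a $2$-dimensional one, the fixed point is the unique intersection of the corresponding isotropic geodesic with $\Lambda_\G$), which is impossible; invariant non-degenerate subspaces of signature $(k,2)$, $(k,1)$ and $(k,0)$ are excluded using that the convex hull of $\Lambda_\G$ has nonempty interior and that $\Lambda_\G$ is homeomorphic to $\Ss^{n-1}$. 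Then, in the irreducible case, the paper invokes the Di Scala--Leistner classification: the connected irreducible proper subgroups of $\SO(n,2)$ are $\mathrm U(\tfrac n2,1)$, $\mathrm{SU}(\tfrac n2,1)$, $\Ss^1.\SO_0(\tfrac n2,1)$ (for $n$ even) and an irreducible $\SO_0(2,1)$ (for $n=3$); the first three contain no proximal elements, and the last is excluded because a quasi-Fuchsian subgroup of $\PO(3,2)$ has cohomological dimension $3$. Until you supply these arguments, or equivalents, your proposal does not prove the theorem.
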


The proof is based on the following result which is interesting on its own:

\begin{prop} \label{prop - fuchsien ou zariski dense intro}  
Let $\G\subset \PO(n,2)$ be $\AdS$ quasi-Fuchsian. If $\G$ is not $\AdS$-Fuchsian, then it is Zariski dense in $\PO(n,2)$.
\end{prop}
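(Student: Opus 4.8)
The plan is to study the Zariski closure $H := \overline{\G}^{\,\mathrm{Zar}} \subseteq G = \PO(n,2)$ and to show that if $H \ne G$ then $\G$ preserves a totally geodesic copy of $\Hyp^n$, hence is $\AdS$-Fuchsian. I work on the linear level with $V = \R^{n+2}$ equipped with $q_{n,2}$; the variety $X := \partial\ADS^{n+1}$ of isotropic lines is an $n$-dimensional projective quadric on which $G$ acts transitively, and $\Lambda_\G \subseteq X$ is a topological $(n-1)$-sphere. First I would record that $H$ is reductive: since $\G$ is $\AdS$-convex cocompact it is $P_1$-Anosov and in particular contains proximal elements and is not contained in any proper parabolic subgroup of $G$ (an invariant degenerate subspace would produce a global fixed flag incompatible with the north--south dynamics of proximal elements). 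Consequently $H$ stabilises no nonzero degenerate subspace and every $H$-invariant subspace admits a nondegenerate $H$-invariant complement.

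Next comes the dichotomy. Let $W := \mathrm{span}(\Lambda_\G) \subseteq V$, a nondegenerate $\G$-invariant subspace. If $W \subsetneq V$ the action is reducible and I carry out signature bookkeeping. The isotropic lines of $W$ form a projective quadric of dimension $\dim W - 2$, which must be at least $n-1$ in order to contain $\Lambda_\G$; hence $\dim W \ge n+1$, and since $W$ is a proper nondegenerate invariant subspace, $\dim W = n+1$ with restricted signature $(n,1)$ or $(n-1,2)$. The signature $(n-1,2)$ case is excluded because the isotropic lines of such a $W$ form a copy of $\Ein^{n-1}$, a closed $(n-1)$-manifold not homeomorphic to $S^{n-1}$, so it cannot contain the connected closed hypersurface $\Lambda_\G$ as a full-dimensional (hence open and closed) subset. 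Thus $W$ has signature $(n,1)$; its negative lines form a totally geodesic $\Hyp^n \subset \ADS^{n+1}$ preserved by $\G$, and $\G$ is $\AdS$-Fuchsian.

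It remains to treat the case $W = V$, i.e. $H$ acts irreducibly on $V$ (a reducible reductive action would confine the connected limit sphere to the projectivisation of a single invariant factor, contradicting $W = V$). Here I would use that, $\G$ being Zariski dense in the reductive group $H$, its proximal limit set is Zariski dense in a single $H$-orbit $H/P_H \subseteq X$, so $\overline{\Lambda_\G}^{\,\mathrm{Zar}}$ is this flag-type subvariety and $\G$ is $P_H$-Anosov with boundary map $S^{n-1} = \partial_\infty\G \to H/P_H$ of Zariski-dense image. Comparing dimension and topology, $H/P_H$ must then carry a Zariski-dense Lipschitz $(n-1)$-sphere. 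I would invoke the classification of proper connected reductive subgroups $H \subsetneq \SO(n,2)$ that act irreducibly on $\R^{n+2}$ and contain an $\R$-proximal element, and check case by case that the associated flag variety has the wrong dimension or topology (for instance the principal $\SL_2$ produces a circle, of dimension $1 \ne n-1$ for $n \ge 3$), ruling them all out and forcing $H = G$.

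The main obstacle is precisely this last step: the irreducible case requires a genuine classification of proximal irreducible reductive subgroups of $\PO(n,2)$ together with the verification that none of them admits a proximal limit set homeomorphic to an $(n-1)$-sphere. The reducible case is essentially linear algebra, and the reductivity of $H$ is standard for Anosov groups; by contrast, excluding every proper irreducible $H$ is where the bulk of the work lies, and it is here that the hypothesis that $\Lambda_\G$ is a \emph{full} $(n-1)$-sphere, rather than a lower-dimensional limit set, is essential.
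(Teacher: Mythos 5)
Your proposal follows the same overall architecture as the paper's proof (rule out invariant degenerate subspaces by dynamics; show reducibility forces an invariant Lorentzian subspace, hence Fuchsian; kill the irreducible case by a classification of irreducible subgroups of $\SO(n,2)$), but two of its steps have genuine gaps.

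First, your topological exclusion of the signature $(n-1,2)$ case fails precisely when $n=2$, i.e.\ for $\PO(2,2)$, which is the classical case and is covered by the statement. There the isotropic lines of $W$ form $(\Ss^{0}\times\Ss^{1})/\pm$, which \emph{is} homeomorphic to a circle, i.e.\ to $S^{n-1}=S^{1}$, so the ``open and closed, but wrong topology'' argument yields no contradiction: a priori the limit circle could fill up this quadric. A different argument is needed. The paper (signature $(k,2)$ case of Lemma~\ref{lem - fuchsien ou irreductible}) uses that the convex hull $C(\Lambda_\G)$ has nonempty interior in $\ADS^{n+1}$ (Lemma 3.13 of \cite{merigot2012anosov}), which is incompatible with $\Lambda_\G$ lying in the boundary of a proper totally geodesic subspace; alternatively, for $n=2$ one can observe that the isotropic circle of a $(1,2)$-subspace is a timelike circle in $\partial\ADS^{3}$, whereas $\Lambda_\G$ is acausal, being the graph of a distance-decreasing map (Proposition~\ref{prop-l'ensemble limite est le graph d'une fonction lip}).

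Second, the crux of the matter --- the irreducible case --- is left as a plan rather than a proof, and the criterion you propose would not by itself finish it. The classification you need exists and is exactly what the paper invokes: by Di Scala--Leistner \cite{discala}, the connected proper irreducible subgroups of $\SO(n,2)$ are $\mathrm{U}(\frac{n}{2},1)$, $\mathrm{SU}(\frac{n}{2},1)$, $\Ss^1\cdot\SO_0(\frac{n}{2},1)$ ($n$ even) and an irreducible $\SO_0(2,1)\subset\SO(3,2)$. The first three are eliminated not by the dimension or topology of a flag variety but because they contain no proximal element at all (every element satisfies $\lambda_1=\lambda_2$), which would force $\Lambda_\G=\emptyset$; only the last case is amenable to a count like yours (a $2$-sphere cannot inject into a circle; the paper instead uses cohomological dimension: a quasi-Fuchsian subgroup of $\PO(3,2)$ has cohomological dimension $3$, while discrete subgroups of $\PSL(2,\R)$ have cohomological dimension at most $2$). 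Moreover, to quote a classification of \emph{connected} subgroups you must first pass to the identity component $H^{0}$ of the Zariski closure and show that irreducibility survives; the paper does this in Corollary~\ref{coro - composante neutre irreductible}, by noting that $\G\cap H^{0}$ has finite index in $\G$, hence is again quasi-Fuchsian with the same limit set, and applying the reducible-case lemma to it. Your argument, which works throughout with the possibly disconnected closure $H$, is silent on this point, and without both ingredients (the citation and the component argument) the final step is not a proof.
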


Remark that  this proposition and Zimmer's result  \cite[Corollary 1.48]{Zimmer} imply that the limit set is not $\mathcal{C}^2$.

\subsection*{Acknowledgements} Olivier Glorieux acknowledges support from the European Research Council (ERC) under the European Union’s Horizon 2020 research and innovation programme (ERC starting grant DiGGeS, grant agreement No 715982).

\section{Background on \texorpdfstring{$\AdS$}{AdS} quasi-Fuchsian groups. }
We introduce the results needed for the  proofs of Theorem \ref{theorem-main} and Proposition \ref{prop - fuchsien ou zariski dense intro}. Most of this section follows directly from the work of \cite{merigot2012anosov} and \cite{DGK}, except maybe the characterization of  Fuchsian groups as subgroups of $\rO(n,1)$ in Proposition \ref{prop-fuchsian est equivalent sous groupe de O(n,1)}.

First, let us define the anti-de Sitter space. We denote by $\pscalb{\cdot}{\cdot}$ the standard bilinear form of signature $(n,2)$ on $\R^{n+2}$, that is
\[ \pscalb{x}{y}=\sum_{i=1}^n x_iy_i - x_{n+1}y_{n+1}-x_{n+2}y_{n+2}~,\quad x,y\in \R^{n+2}. \]
\begin{defi}
The \emph{anti-de Sitter space} is defined by 
$$\ADS^{n+1} := \{ [x] \in \RP^{n+1} \, |\, \langle x | x \rangle_{n,2} < 0\}.$$
Its boundary is 
$$\partial \ADS^{n+1} := \{ [x] \in \RP^{n+1} \, |\, \langle x | x \rangle_{n,2} = 0\}.$$
Two points $[x],[y]\in\partial\ADS^{n+1}$ are called {\em transverse} if $\pscalb{x}{y}\neq 0$.
\end{defi}

We now give a brief review of the proximal limit set:
\begin{defi} Given $\g\in \PO(n,2)$, we denote by \[\lambda_1(\g)\geq\lambda_2(\g)\geq\cdots\geq\lambda_{n+2}(\g) \] the logarithms of the moduli of the eigenvalues of any of its representants in $\rO(n,2)$. We say that $\g$ is \emph{proximal} if $\lambda_1(\g)>\lambda_2(\g)$.
\end{defi}
Remark that an element of $\PO(n,2)$ has not always a lift in $\SO(n,2)$. However since it is the quotient of $\rO(n,2)$ by $\pm \Id$, the set of moduli of eigenvalues of a lift is well defined. If $\g\in\PO(n,2)$ is proximal, it has a unique lift $\hat\g\in\rO(n,2)$ which has $e^{\lambda_1(\g)}$ as an eigenvalue.

Notice that we always have $\lambda_3(\g)=\cdots=\lambda_n(\g)=0$, as well as $\lambda_1(\g)+\lambda_{n+2}(\g)=\lambda_2(\g)+\lambda_{n+1}(\g)=0$.

\begin{defi}
If $\g\in \PO(n,2)$ is proximal, we denote by $\g_+\in \R\proj^{n+1}$ its attracting fixed point, i.e.  the eigendirection for the eigenvalue of modulus $e^{\lambda_1(\g)}$ of a lift of $\g$ to $\rO(n,2)$. We also set $\g_-=(\g^{-1})_+$.
\end{defi}
Note that $\g_+$  is necessarily isotropic, i.e. $\g_+\in \partial \ADS^{n+1}$.


\begin{prop}[Proposition 5 in \cite{frances}]
If $\g \in \PO(n,2)$  is proximal, then $\lim_{n\to+\infty}\g^n(\xi)=\g_+$ for all $\xi\in\ADS^{n+1}\cup\partial\ADS^{n+1}$ which is transverse to $\g_-$ (i.e. such that $\pscalb{\xi}{\g_-}\neq 0$).
\end{prop}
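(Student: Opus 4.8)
The plan is to use the spectral decomposition of the distinguished lift $\hat\g\in\rO(n,2)$. Since $\g$ is proximal we have $\lambda_1(\g)>\lambda_2(\g)$, so $e^{\lambda_1(\g)}$ is the strictly largest modulus among the eigenvalues of $\hat\g$. Because the nonreal eigenvalues of a real matrix occur in conjugate pairs of equal modulus, this forces the top eigenvalue to be real, and the choice of $\hat\g$ makes it $+e^{\lambda_1(\g)}$; moreover it is a simple eigenvalue, since a Jordan block of size $\geq 2$ (or a higher multiplicity) would produce a second eigenvalue of the same top modulus, contradicting $\lambda_1(\g)>\lambda_2(\g)$. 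I therefore write $\R^{n+2}=E_+\oplus H$, where $E_+=\R\g_+$ is the top eigenline and $H$ is the $\hat\g$-invariant sum of all the remaining generalized eigenspaces, on which the spectral radius equals $e^{\lambda_2(\g)}<e^{\lambda_1(\g)}$.

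Next I identify the transversality hypothesis with a condition on this decomposition. The key algebraic fact is that, since $\hat\g$ preserves $\pscalb{\cdot}{\cdot}$, two generalized eigenspaces $V_\alpha$, $V_\beta$ are $\pscalb{\cdot}{\cdot}$-orthogonal unless $\alpha\beta=1$. Here the bottom eigenline is $E_-=\R\g_-$, with eigenvalue $e^{-\lambda_1(\g)}$ (recall $\g_-=(\g^{-1})_+$). Applying this fact, every generalized eigenspace occurring in $H$ is orthogonal to $\g_-$, so $H\subseteq(\g_-)^\perp$. Both spaces have dimension $n+1$ (the latter because $\pscalb{\cdot}{\cdot}$ is nondegenerate), and $\g_+\notin(\g_-)^\perp$ since $\pscalb{\g_+}{\g_-}\neq 0$; hence $H=(\g_-)^\perp$. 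Consequently, writing a representative $x$ of $\xi$ as $x=x_++x_H$ with $x_+\in E_+$ and $x_H\in H$, the transversality condition $\pscalb{\xi}{\g_-}\neq 0$ is exactly the statement $x_+\neq 0$.

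Finally I carry out the dynamics. Normalizing by $e^{n\lambda_1(\g)}$ gives
$$ e^{-n\lambda_1(\g)}\,\hat\g^n x = x_+ + e^{-n\lambda_1(\g)}\,\hat\g^n x_H. $$
Since the spectral radius of $\hat\g|_H$ is $e^{\lambda_2(\g)}$, the norm of $\hat\g^n x_H$ grows at most like a polynomial in $n$ times $e^{n\lambda_2(\g)}$, so the second term tends to $0$ because $e^{\lambda_2(\g)-\lambda_1(\g)}<1$. When $x_+\neq 0$ the right-hand side converges to $x_+\neq 0$, and passing to $\RP^{n+1}$ yields $\g^n(\xi)\to[\,x_+\,]=\g_+$, as required.

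The main obstacle is the orthogonality step identifying $H$ with $(\g_-)^\perp$: one must verify carefully that generalized eigenspaces of a $\pscalb{\cdot}{\cdot}$-isometry pair trivially unless the eigenvalues are mutually inverse, and handle the possible Jordan blocks inside $H$ (the resulting growth bound is polynomial times $e^{n\lambda_2(\g)}$, which still vanishes after normalization). Once this duality and the simplicity of the top eigenvalue are in place, the projective convergence is routine.
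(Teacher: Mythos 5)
Your proof is correct. There is, however, nothing in the paper to compare it against: the paper does not prove this statement, it imports it as Proposition 5 of Frances's article on Lorentzian Kleinian groups \cite{frances}. Your argument — isolating the simple top eigenline $E_+=\R\g_+$ of the distinguished lift $\hat\g$, identifying the complementary $\hat\g$-invariant subspace $H$ (sum of the remaining generalized eigenspaces) with $(\g_-)^\perp$ via the duality ``$V_\alpha\perp V_\beta$ unless $\alpha\beta=1$'' for isometries of $\pscalb{\cdot}{\cdot}$, and then letting the spectral gap $e^{\lambda_2(\g)-\lambda_1(\g)}<1$ kill the $H$-component after normalization — is the standard proof of north--south-type dynamics for proximal elements, and it effectively fills in the citation; it even proves slightly more, since it never uses that $\xi$ is isotropic and hence gives convergence for every $\xi\in\RP^{n+1}$ transverse to $\g_-$. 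Two cosmetic remarks: the orthogonality of generalized eigenspaces is cleanest via the tensor-product trick (the pairing $P$ on $V_\alpha\otimes V_\beta$ satisfies $P\circ(A\otimes B-\Id)=0$ with $A\otimes B-\Id$ invertible when $\alpha\beta\neq 1$), which handles all Jordan blocks at once; and your claim $\pscalb{\g_+}{\g_-}\neq 0$ is most naturally a \emph{consequence} of the equality $H=(\g_-)^\perp$ (which already follows from $H\subseteq(\g_-)^\perp$ plus the dimension count, since $\g_+\notin H$), rather than an input to it — as written the sentence reads as if it were needed to establish that equality.
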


Recall that the proximal limit set of a discrete subgroup  $\G\subset \PO(n,2)$ is the closure $\Lambda_\G$ in $\RP^{n+1}$ of the set of all attracting fixed points of proximal elements of $\G$, it is therefore a subset of $\partial \ADS^{n+1}$. 

If additionally $\G$ is $\AdS$ convex-cocompact, then it is word-hyperbolic and  the action of $\G$  on its proximal limit set is conjugated to the action on its Gromov boundary (see \cite[Theorem 1.11]{DGK} for the irreducible case, and \cite[Theorem 1.24]{DGK RPn} for the general case). As a consequence, we have: 
\begin{prop}\label{prop-action minimale sur l ensemble limite }\cite[Observation 27 p.153]{ghysdelaHarpe}
If $\G\subset \PO(n,2)$ is $\AdS$ convex-cocompact, then the action of $\G$ on the limit set $\Lambda_\G$ is minimal, i.e. all orbits are dense.
\end{prop}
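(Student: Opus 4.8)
The plan is to exploit the two ingredients already recorded above: the North--South dynamics of proximal elements (the proposition of Frances) and the fact, quoted from \cite{DGK}, that $\G$ is word-hyperbolic with its action on $\Lambda_\G$ topologically conjugate to its action on the Gromov boundary $\partial_\infty\G$. Minimality of the action of a non-elementary word-hyperbolic group on its Gromov boundary is classical \cite{ghysdelaHarpe}, so at this level the only point to check is that $\G$ is non-elementary; this is immediate, since $\Lambda_\G$, being the ideal boundary of a convex set with nonempty interior, is infinite, whence $\partial_\infty\G$ is infinite and $\G$ is neither finite nor virtually cyclic. I would nonetheless spell out the dynamical argument directly on $\Lambda_\G$, as it is short and transparent.

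Concretely, minimality is equivalent to the assertion that every nonempty closed $\G$-invariant subset $K\subseteq\Lambda_\G$ equals $\Lambda_\G$. Since $\Lambda_\G$ is by definition the closure of the set of attracting fixed points $\g_+$ of proximal $\g\in\G$, and since the inverse of a proximal element is again proximal with $\g_-=(\g^{-1})_+$, the set of such fixed points coincides with the set of repelling fixed points and is dense in $\Lambda_\G$; it therefore suffices to prove $\g_+\in K$ for every proximal $\g$. The key local input is that $\g_+$ is transverse to $\g_-$, i.e. $\pscalb{\g_+}{\g_-}\neq0$: the eigenvectors of the lift $\hat\g\in\rO(n,2)$ attached to eigenvalues whose product is different from $1$ are orthogonal, so $\g_+$ is orthogonal to every eigendirection except $\g_-$, and as $\g_+$ is isotropic and $\pscalb{\cdot}{\cdot}$ is nondegenerate it cannot be orthogonal to $\g_-$ as well. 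Granting this, for a fixed proximal $\g$ I would choose a point $\zeta\in K$ transverse to $\g_-$; the proposition of Frances then yields $\g^m\zeta\to\g_+$, and since $K$ is closed and $\G$-invariant this forces $\g_+\in K$.

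The main obstacle is precisely to guarantee the existence of such a transverse point in $K$, that is, to rule out that the invariant set $K$ is trapped inside the degenerate locus $\{\,[\eta]:\pscalb{\eta}{\g_-}=0\,\}$. Here I would use that $\Lambda_\G=\partial_i\cC$ is contained in no projective hyperplane, since the properly convex set $\cC$ it bounds has nonempty interior; combined with the density of the repelling fixed points in $\Lambda_\G$, this shows that no point of $\partial\ADS^{n+1}$ is orthogonal to all of $\Lambda_\G$, and one propagates this non-degeneracy from $\Lambda_\G$ to $K$ using the $\G$-invariance of the orthogonality relation together with the fixed-point dynamics of the previous step. As this bookkeeping of transversality is exactly what is packaged cleanly by passing to the Gromov boundary, I would ultimately close the argument by invoking the conjugacy of \cite{DGK} and the classical minimality statement \cite{ghysdelaHarpe}, keeping the direct dynamical computation as the geometric content underlying it.
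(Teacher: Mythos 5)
Your proposal is correct and ultimately coincides with the paper's argument: the paper obtains minimality purely from the conjugacy (quoted from \cite{DGK}) between the action of $\G$ on $\Lambda_\G$ and its action on the Gromov boundary, combined with the classical minimality of the boundary action of a non-elementary word-hyperbolic group \cite{ghysdelaHarpe}, which is exactly the route you fall back on. Your supplementary North--South dynamics sketch is a reasonable account of what underlies that classical fact, but since (as you yourself note) the transversality bookkeeping for an arbitrary closed invariant set $K$ is never actually completed, it is the citation route --- the paper's proof --- that does the work.
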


The group $\rO(n,1)$ can be embedded in $\PO(n,2)$ by the following map: 
$$A \mapsto \left[\begin{array}{cc} 
A&0 \\
0 & 1\\
\end{array} \right].$$

We will say that an element (respectively a  subgroup) of $\PO(n,2)$ is conjugate to an element (respectively to  a  subgroup) of $\rO(n,1)$ if it has a conjugate in the image of this embedding.

The image of $\rO(n,1)$ is exactly the stabilizer of the point $[0:\cdots:0:1]\in \ADS^{n+1}$, so conjugates of $\rO(n,1)$ are exactly stabilizers of points in $\ADS^{n+1}$. Since a fixed point in $\ADS^{n+1}$ corresponds to an eigendirection which is negative for $\pscalb{\cdot}{\cdot}$, we immediately get the following characterization:
\begin{prop} 
\label{prop - element conjugue point fixe}
Let $\g\in \PO(n,2)$. Then $\g$ is conjugate to an element of $\rO(n,1)$ if and only if $\g$ fixes a point of $\ADS^{n+1}$. If $\g$ is proximal, this is also equivalent to $\lambda_2(\g)=0$.
\end{prop}
 A subgroup of $\PO(n,2)$ which is conjugate to a  cocompact lattice of $\rO(n,1)$ is  $\AdS$-Fuchsian, as it fixes a totally geodesic copy of $\Hyp^{n}$ on which it acts properly discontinuously and cocompactly. These are the only $\AdS$-Fuchsian groups: 
\begin{prop}\label{prop-fuchsian est equivalent sous groupe de O(n,1)}
A discrete group of $\PO(n,2)$ is $\AdS$-Fuchsian if and only if it is conjugate to a cocompact lattice of $\rO(n,1)$.
\end{prop}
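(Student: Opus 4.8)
The plan is to prove the equivalence in Proposition~\ref{prop-fuchsian est equivalent sous groupe de O(n,1)} by establishing both implications, with the forward direction being the substantial one. The reverse implication is already essentially observed in the text: if $\G$ is conjugate to a cocompact lattice of $\rO(n,1)$, then via the given embedding it fixes the totally geodesic copy of $\Hyp^n$ corresponding to the subspace $\{x_{n+2}=0\}$ (intersected with $\ADS^{n+1}$), acts properly discontinuously and cocompactly there, and its limit set is the boundary sphere of this $\Hyp^n$, hence an $(n-1)$-sphere. So such a group is $\AdS$-Fuchsian by definition.

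For the forward implication, suppose $\G$ is $\AdS$-Fuchsian, so by definition it is $\AdS$-convex cocompact and preserves a totally geodesic copy $P$ of $\Hyp^n$ inside $\ADS^{n+1}$. First I would identify such a totally geodesic $\Hyp^n$ with the intersection of $\ADS^{n+1}$ with a projective hyperplane, namely the image of a linear subspace $V\subset\R^{n+2}$ of signature $(n,1)$ on which $q_{n,2}$ restricts nondegenerately. The orthogonal complement $V^{\perp}$ then has signature $(0,1)$, i.e. is a negative definite line, and $\R^{n+2}=V\oplus V^{\perp}$ is a $q_{n,2}$-orthogonal decomposition. Since $\G$ preserves $P$, it preserves the linear subspace $V$ (as $P$ determines $V$ as the span of the corresponding lines), and because $\G$ preserves $q_{n,2}$ it must also preserve $V^{\perp}$. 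Choosing a basis adapted to $V\oplus V^{\perp}$ in which $q_{n,2}|_V$ is the standard form of signature $(n,1)$ and $V^{\perp}$ carries $-1$, the action of $\G$ becomes block diagonal with an $\rO(n,1)$ block acting on $V$ and a $\pm1$ scalar on $V^{\perp}$; passing to $\PO(n,2)$ this is exactly a conjugate of the image of the stated embedding of $\rO(n,1)$. This realizes $\G$ as conjugate to a subgroup of $\rO(n,1)$.

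It then remains to upgrade ``subgroup of $\rO(n,1)$'' to ``cocompact lattice of $\rO(n,1)$''. Here I would use that $\G$ acts properly discontinuously and cocompactly on a properly convex set $\cC\subset\ADS^{n+1}$ with ideal boundary $\Lambda_\G$ an $(n-1)$-sphere. Under the identification above, $\Lambda_\G$ is contained in $\partial P\cong \partial\Hyp^n$, and since both are topological $(n-1)$-spheres, the minimality of the $\G$-action on $\Lambda_\G$ (Proposition~\ref{prop-action minimale sur l ensemble limite }) forces $\Lambda_\G=\partial P$: the $\G$-invariant closed subset $\Lambda_\G$ of the sphere $\partial P$ is all of it, because a proper closed invariant subset would contradict either minimality or invariance of domain. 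Having full limit set equal to $\partial\Hyp^n$, the group $\G$ acting as a discrete subgroup of $\rO(n,1)=\Isom(\Hyp^n)$ has limit set the entire boundary sphere; combined with convex cocompactness (the action on the convex hull of the full limit set, which is all of $\Hyp^n$, is cocompact), this means $\G$ is a cocompact lattice in $\rO(n,1)$.

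The main obstacle I anticipate is the last step: making rigorous the passage from ``preserves a totally geodesic $\Hyp^n$ and is $\AdS$-convex cocompact'' to ``acts cocompactly on that $\Hyp^n$'', i.e. ruling out the possibility that $\G$ preserves the copy of $\Hyp^n$ but acts on it with nonempty domain of discontinuity at infinity (a genuinely convex cocompact but non-lattice subgroup of $\rO(n,1)$). The key leverage is that the $\AdS$-quasi-Fuchsian hypothesis pins the limit set to be a \emph{full} $(n-1)$-sphere rather than a Cantor-like or lower-dimensional subset; I would argue this via the identification of $\Lambda_\G$ with the Gromov boundary together with the fact that $\partial P$ is itself a sphere of the same dimension, so the inclusion $\Lambda_\G\subseteq\partial P$ between equidimensional spheres must be onto. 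Care is needed to confirm that the totally geodesic $\Hyp^n$ is genuinely $n$-dimensional (so that its boundary is exactly an $(n-1)$-sphere matching the dimension of $\Lambda_\G$), which is where the definition of $\AdS$-Fuchsian and the signature bookkeeping $V\cong\R^{n,1}$ must be tied together precisely.
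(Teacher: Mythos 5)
Your outline follows the same route as the paper's own proof: identify the stabilizer of the invariant totally geodesic copy $P$ of $\Hyp^n$ with a conjugate of $\rO(n,1)$, show $\Lambda_\G\subset\partial P$, deduce that the two $(n-1)$-spheres coincide, and then invoke convex cocompactness to get a properly discontinuous, cocompact action on $P$, which is the convex hull of $\Lambda_\G$. Your treatment of the reverse implication and of the block decomposition $V\oplus V^\perp$ is fine, and in fact more detailed than the paper, which simply asserts that the stabilizer of $P$ is conjugate to $\rO(n,1)$.

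However, the step you present as automatic --- ``Under the identification above, $\Lambda_\G$ is contained in $\partial P$'' --- is exactly the step on which the paper spends its proof, and it does require an argument: nothing in the definition of the proximal limit set forces the attracting fixed points of proximal elements to lie on the boundary of an invariant totally geodesic hypersurface. The paper argues dynamically: for a proximal $\g\in\G$, pick $\xi\in\partial P$ transverse to $\g_-$ (possible because $\g_-$, being isotropic, cannot lie in the negative definite line $V^\perp$, so $\partial P\not\subset\proj(\g_-^\perp)$); then $\g^n\xi\to\g_+$, and since $\partial P$ is closed and $\G$-invariant, $\g_+\in\partial P$; the intersection $\Lambda_\G\cap\partial P$ is then a nonempty closed invariant subset of $\Lambda_\G$, so minimality gives $\Lambda_\G\subset\partial P$. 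Alternatively, your block decomposition yields this in one line: after conjugation every element of $\G$ acts on the line $V^\perp$ by $\pm 1$, so the eigendirection of a proximal $\g$ for the eigenvalue of modulus $e^{\lambda_1(\g)}>1$ must lie in $V$, hence $\g_+\in\proj(V)\cap\partial\ADS^{n+1}=\partial P$; you should say this explicitly. A separate caveat: minimality alone cannot give you the final equality $\Lambda_\G=\partial P$. Minimality concerns closed invariant subsets \emph{of} $\Lambda_\G$, not of $\partial P$; the action of a convex cocompact non-lattice subgroup of $\rO(n,1)$ on its limit set is also minimal, yet that limit set is a proper subset of the sphere. What forces equality is purely topological, namely invariance of domain: the compact $(n-1)$-manifold $\Lambda_\G$ embedded in the $(n-1)$-sphere $\partial P$ is open and closed there, hence all of $\partial P$ by connectedness. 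So in your sentence ``would contradict either minimality or invariance of domain,'' only the second alternative actually works.
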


\begin{proof}
Let $\G\subset \PO(n,2)$ be an $\AdS$-Fuchsian group. Let  $H$ be a totally geodesic copy of $\Hyp^n$ in $\ADS^{n+1}$ preserved by $\G$. Since the stabilizer $L\subset \PO(n,2)$ of $H$ is conjugate to $\rO(n,1)$, we only have to show that $\G$ is a cocompact lattice of $L$. This will be a consequence of the fact that $\G$ acts properly discontinuously and cocompactly on $H$.

 Let $\g$ be  a proximal element of $\G$. Let $\xi \in \partial H$ be transverse to the repelling fixed point  $\g_-$. The sequence $\g^n \xi $ lies in $\partial H$ and converges to $\g^+$. Therefore, $\partial H$ contains the attracting point of $\g$, and it follows that $\Lambda_\G \subset \partial H$. Since $\Lambda_\G$ and $\partial H$ are homeomorphic to $\Ss^{n-1}$, we have $\Lambda_\G =\partial H$. 

Finally since, $\G$ is convex-cocompact, $\G$ acts properly discontinuously and cocompactly on the convex hull of $\Lambda_\G$ that is $H$ \cite[Theorem 1.24]{DGK RPn}. 

\end{proof}

The boundary $\partial \ADS^{n+1}$ is naturally equipped with a conformal Lorentzian structure. It is conformally equivalent to the quotient of $\Ss^{n-1} \times \Ss^1 $ endowed with the Lorentzian conformal metric  $[g_{\Ss^{n-1}} - d\theta^2]$ (where $g_{\Ss^{n-1}} $ is the round metric of curvature $1$ on ${\Ss^{n-1}} $, and $d\theta^2$ is the round metric on the circle of radius one) by the antipodal map $(x,\theta)\mapsto(-x,-\theta)$. See \cite[paragraph 2.3]{merigot2012anosov} for more details.\\
\indent An important fact from pseudo-Riemannian geometry is that unparametrized isotropic geodesics (i.e. geodesics whose tangent vectors are isotropic) only depend on the conformal class. The isotropic geodesics of the metric $g_{\Ss^{n-1}}-d\theta^2$ on $\Ss^{n-1}\times \Ss^1$ are given by $\theta\mapsto (c(\theta),\theta)$ where $c:\Ss^1\to \Ss^{n-1}$ is a unit-speed geodesic. The image in $\partial \ADS^{n+1}$ of such an isotropic geodesic is the intersection of $\partial\ADS^{n+1}$ with a projective line.\\
\indent Using the absence of segments in the limit sets of $\AdS$ quasi-Fuchsian groups we have:

\begin{prop}\label{prop-l'ensemble limite est le graph d'une fonction lip}
The limit set $\Lambda_\G\subset\partial\ADS^{n+1}$ of an  $\AdS$ quasi-Fuchsian group $\G\subset\PO(n,2)$ is the quotient by the antipodal map of the graph of a distance-decreasing\footnote{That is $\forall x\neq y, \, d(f(x),f(y)) < d(x,y)$ .} map $f \, :\, \Ss^{n-1} \tv \Ss^{1}$ where $\Ss^{n-1}$ and $\Ss^{1}$ are endowed with the round metrics.
\end{prop}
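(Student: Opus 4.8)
The plan is to transport the problem into the product model $\partial\ADS^{n+1}=(\Ss^{n-1}\times\Ss^1)/a$, where $a(u,\theta)=(-u,-\theta)$ is the antipodal map, and to read the distance-decreasing graph condition directly off the sign of $\pscalb{\cdot}{\cdot}$. First I would set up the dictionary. Writing $\R^{n+2}=\R^n\oplus\R^2$ with $q_{n,2}(\alpha,\beta)=|\alpha|^2-|\beta|^2$, every isotropic line has a representative $(u,v)$ with $u\in\Ss^{n-1}$ and $v\in\Ss^1$, unique up to $a$, and for two such representatives
\[
\pscalb{(u_1,v_1)}{(u_2,v_2)}=\cos d_{\Ss^{n-1}}(u_1,u_2)-\cos d_{\Ss^1}(v_1,v_2).
\]
Since both distances lie in $[0,\pi]$ and $\cos$ is injective there, transversality $\pscalb{\cdot}{\cdot}\neq0$ is equivalent to $d_{\Ss^{n-1}}\neq d_{\Ss^1}$, the form is negative exactly when $d_{\Ss^{n-1}}>d_{\Ss^1}$, and it vanishes exactly when the two distances coincide. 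A projective segment joining two isotropic lines $[x],[y]$ stays in $\partial\ADS^{n+1}$ if and only if $\pscalb{x}{y}=0$; hence the absence of segments in $\Lambda_\G$ says precisely that any two distinct points of $\Lambda_\G$ are transverse, i.e. $d_{\Ss^{n-1}}\neq d_{\Ss^1}$ for every pair. The conclusion we want, $d_{\Ss^1}(f(u_1),f(u_2))<d_{\Ss^{n-1}}(u_1,u_2)$, is exactly the statement that every pair falls in the negative (spacelike) regime.

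Next I would pass to the $2{:}1$ cover $q:\Ss^{n-1}\times\Ss^1\to\partial\ADS^{n+1}$ and set $\widetilde\Lambda=q^{-1}(\Lambda_\G)$. For $n\geq 3$ the sphere $\Lambda_\G\cong\Ss^{n-1}$ is simply connected, so $\widetilde\Lambda$ is two disjoint copies of $\Ss^{n-1}$ exchanged by $a$; I fix one sheet $\Lambda_+$, which $q$ maps homeomorphically onto $\Lambda_\G$. On the space of ordered distinct pairs $\Conf_2(\Lambda_+)$ the map $(p,p')\mapsto \pscalb{x_p}{x_{p'}}$ (computed with representatives chosen in $\Lambda_+$) is continuous and, by the absence of segments, nowhere zero. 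Since $\Conf_2(\Ss^{n-1})=\Ss^{n-1}\times\Ss^{n-1}\setminus\Delta$ is connected for every $n\geq2$, this function has a constant sign: either all pairs of $\Lambda_+$ are spacelike ($d_{\Ss^{n-1}}>d_{\Ss^1}$) or all are timelike ($d_{\Ss^{n-1}}<d_{\Ss^1}$).

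The heart of the argument is excluding the timelike alternative, and this is where I expect the real obstacle. If all pairs were timelike, then the projection $\Lambda_+\to\Ss^1$, $(u,v)\mapsto v$, would be injective, since equal $\Ss^1$-coordinates would force $d_{\Ss^{n-1}}<0$; but a compact $\Ss^{n-1}$ cannot inject continuously into $\Ss^1$ for $n\geq 3$, by invariance of domain, so the timelike case is impossible and $\Lambda_+$ is spacelike. This dimension count does not settle the sign for $n=2$, nor does the topology alone fix it (antipodal symmetry shows both regimes are a priori consistent with the no-segment condition). The clean, uniform reason is that the limit set of an $\AdS$-convex cocompact group is achronal, being the ideal boundary of the convex set $\cC$; I would invoke this (following \cite{merigot2012anosov,DGK}) to rule out timelike pairs in all dimensions, and I would note that the case $n=2$ — where $\widetilde\Lambda$ may a priori be a connected double cover — is the setting of our earlier work and requires the achronality input to guarantee that $\Lambda_+$ is a graph at all.

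Finally, once the spacelike regime is established, I would extract $f$. Equal $\Ss^{n-1}$-coordinates on $\Lambda_+$ would force $d_{\Ss^1}<d_{\Ss^{n-1}}=0$, so the projection $P:\Lambda_+\to\Ss^{n-1}$, $(u,v)\mapsto u$, is injective; being a continuous injection of a compact $\Ss^{n-1}$ into the connected $\Ss^{n-1}$, it is open (invariance of domain) and closed, hence a homeomorphism. Setting $f=\mathrm{proj}_{\Ss^1}\circ P^{-1}:\Ss^{n-1}\to\Ss^1$ presents $\Lambda_+$ as $\mathrm{graph}(f)$, and the spacelike inequality reads $d_{\Ss^1}(f(u_1),f(u_2))<d_{\Ss^{n-1}}(u_1,u_2)$, so $f$ is distance-decreasing. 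Since $q$ maps $\Lambda_+$ homeomorphically onto $\Lambda_\G$, we obtain $\Lambda_\G=q(\mathrm{graph}\,f)$, which is the asserted description.
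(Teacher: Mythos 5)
Your dictionary between the quadratic form and the two spherical distances is correct, and the final extraction of $f$ from the spacelike condition (injectivity of the projection onto $\Ss^{n-1}$ plus invariance of domain) is fine. The genuine gap is the sentence ``hence the absence of segments in $\Lambda_\G$ says precisely that any two distinct points of $\Lambda_\G$ are transverse.'' Only one direction of this is formal: if all pairs are transverse, then no segment can be contained in $\Lambda_\G$. The direction you actually use --- no segments contained in $\Lambda_\G$ implies every pair is transverse --- is not a restatement of the definition: a non-transverse pair $[x],[y]\in\Lambda_\G$ spans a totally isotropic $2$-plane, hence lies on a photon of $\partial\ADS^{n+1}$, but nothing in the definition of $\AdS$-convex cocompactness forces the arc of that photon between $[x]$ and $[y]$ to be contained in $\Lambda_\G$; the no-segment condition only forbids segments lying \emph{inside} the limit set. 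Since this ``nowhere zero'' claim is the input to your connectedness argument, and since it is exactly the strictness assertion of the proposition, the proof as written begs the question at its crux.

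The way to close the gap is essentially the paper's own proof. Cite Barbot--M\'erigot \cite{merigot2012anosov} first: $\Lambda_\G$ lifts to the graph of a $1$-Lipschitz map $f:\Ss^{n-1}\to\Ss^1$ (this is the achronality you invoke only at the very end, and which you need anyway to exclude the timelike alternative and to handle $n=2$). Then a non-transverse pair means $d(f(u_1),f(u_2))=d(u_1,u_2)$ for some $u_1\neq u_2$; the $1$-Lipschitz property and the triangle inequality force, for every $w$ on a minimizing geodesic from $u_1$ to $u_2$, the equality $d(f(u_1),f(w))=d(u_1,w)$ with $f(w)$ on the minimizing arc from $f(u_1)$ to $f(u_2)$, so $f$ restricts to an isometry along this geodesic and its graph over it is a photon arc, i.e.\ a nontrivial projective segment contained in $\Lambda_\G$, contradicting the definition. (Alternatively, pairwise transversality of the limit set can be quoted directly from the Anosov property in \cite{DGK}.) Once that step is supplied, your constancy-of-sign and invariance-of-domain considerations become correct but largely redundant: achronality already gives $\pscalb{x_p}{x_{p'}}\leq 0$ on the graph lifts, and the equality-case argument above is all that remains to get the strict inequality.
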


\begin{proof}
Barbot and Mérigot showed in \cite{merigot2012anosov} that the limit set of a quasi-Fuchsian group lifts to  the graph of a 1-Lipschitz map. Since the limit set does not contain any non trivial segment of $\partial \ADS^{n+1}$ the map strictly decreases the distance. Indeed, if $d(f(x),f(y))=d(x,y)$ for some distinct $x,y\in \Ss^{n-1}$, then $f$ must be an isometry on the shortest geodesic segment from $x$ to $y$, and the graph of $f$ over this geodesic segment is a lightlike geodesic in $\Ss^{n-1}\times \Ss^1$, it thus descends to a straight line in $\partial\ADS^{n+1}$.
\end{proof}
Finally we will need the following proposition, which  in  the Lorentzian vocabulary translates as the fact that  the limit set is a Cauchy hypersurface:
\begin{prop}\label{prop-les geoedeisc de type lumiere intersect l ensemble limite}
If $\G\subset \PO(n,2)$ is $\AdS$ quasi-Fuchsian, then every isotropic geodesic of $\partial \ADS^{n+1} $  intersects  $\Lambda_\G$ at exactly one point. 
\end{prop}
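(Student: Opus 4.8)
The plan is to work in the double cover $\Ss^{n-1}\times\Ss^1$ of $\partial\ADS^{n+1}$, where by Proposition \ref{prop-l'ensemble limite est le graph d'une fonction lip} the limit set lifts to $\graph f=\{(x,f(x)):x\in\Ss^{n-1}\}$ for a distance-decreasing $f\colon\Ss^{n-1}\to\Ss^1$. In the representative metric $g_{\Ss^{n-1}}-d\theta^2$ the isotropic geodesics are exactly the product geodesics with equal speeds in the two factors, so an isotropic geodesic lifts to a curve $\g(t)=(c(t),\theta_0+t)$ (the case $\theta_0-t$ being symmetric), where $c$ is a unit-speed great circle of $\Ss^{n-1}$. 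Writing $p$ for the covering projection and $\sigma(x,\theta)=(-x,-\theta)$ for the deck involution, the invariance of $\graph f$ under $\sigma$ gives $p^{-1}(\Lambda_\G)=\graph f$; hence a point $p(\g(t))$ lies in $\Lambda_\G$ if and only if $\g(t)\in\graph f$, so the downstairs intersection is $p(\g\cap\graph f)$. It therefore suffices to show that, over one period $t\in[0,2\pi)$, the curve $\g$ meets $\graph f$ at exactly one point.

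Next I would reduce this to a one-variable monotonicity statement. Let $\tilde g\colon\R\to\R$ be a continuous lift of $t\mapsto f(c(t))\in\Ss^1$; since $f$ is $1$-Lipschitz and $c$ is unit speed, $\tilde g$ is $1$-Lipschitz. The intersections $\g(t)\in\graph f$ are the solutions of $\phi(t):=\theta_0+t-\tilde g(t)\in 2\pi\Z$. The key estimate is that $\phi$ is strictly increasing: for $0<t_2-t_1<\pi$ the points $c(t_1),c(t_2)$ are distinct with $d_{\Ss^{n-1}}(c(t_1),c(t_2))=t_2-t_1$, and since $|\tilde g(t_2)-\tilde g(t_1)|\le t_2-t_1<\pi$ the distance-decreasing property yields $|\tilde g(t_2)-\tilde g(t_1)|=d_{\Ss^1}\big(f(c(t_1)),f(c(t_2))\big)<t_2-t_1$, whence $\phi(t_2)-\phi(t_1)>0$. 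Strict monotonicity on all intervals of length $<\pi$ yields it globally, which already gives uniqueness of an intersection on any interval on which $\phi$ increases by less than $2\pi$.

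For the count I would compute the winding number $k=\tfrac1{2\pi}(\tilde g(2\pi)-\tilde g(0))\in\Z$ of $f\circ c$. Since $\tilde g$ is $1$-Lipschitz one has $|k|\le1$, and $|k|=1$ would force $\tilde g'=1$ almost everywhere on a period, hence $\tilde g(t_2)-\tilde g(t_1)=t_2-t_1$ for nearby $t_1<t_2$, contradicting the strict estimate above; therefore $k=0$ (equivalently, the $\sigma$-equivariance $f(-x)=-f(x)$ forces $\tilde g(t+\pi)=-\tilde g(t)+\mathrm{const}$). Consequently $\phi(t+2\pi)=\phi(t)+2\pi$, so $\phi$ is a strictly increasing homeomorphism of $\R$ commuting with translation by $2\pi$; thus $\phi([0,2\pi))$ is an interval of length exactly $2\pi$ meeting $2\pi\Z$ in a single point, and $\phi$ attains that value once. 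Hence $\g$ meets $\graph f$ at exactly one point per period, and projecting by $p$ gives the claim.

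The main obstacle is the global topology rather than the local estimate: the strict-contraction bound only rules out two intersections that are close along the geodesic, so I must control the winding number in order to exclude both a ``wrap-around'' second intersection and the possibility that the geodesic never meets the graph at all. The delicate point is thus that the winding number of $f\circ c$ is forced to vanish, which is exactly where the strictness of the distance-decreasing hypothesis (as opposed to mere $1$-Lipschitz) is indispensable: for a map that were $1$-Lipschitz but isometric along some great circle, the isotropic geodesic could run parallel to the graph and the clean ``exactly one'' count would fail.
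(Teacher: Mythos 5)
Your argument is correct in substance and, after the same initial reduction as the paper, takes a genuinely different route for the key step. Like the paper, you use Proposition \ref{prop-l'ensemble limite est le graph d'une fonction lip} to parametrize the photon as $(c(t),\theta_0+t)$ and to identify the intersection points with the solutions of $f(c(t))=\theta_0+t$. At that point the paper is done in one line: since $c$ is an isometric embedding of $\Ss^1$ onto a great circle, the relevant self-map of $\Ss^1$ is distance-decreasing, and a distance-decreasing self-map of a compact metric space has a unique fixed point (minimize $x\mapsto d(x,f(x))$). You instead prove this fixed-point statement by hand in the circle case: lift to $\R$, show $\phi(t)=\theta_0+t-\tilde g(t)$ is strictly increasing, and exclude winding number $\pm 1$ to get $\phi(t+2\pi)=\phi(t)+2\pi$, hence exactly one solution per period. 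Your route is longer but elementary and makes explicit exactly where strictness (as opposed to the mere $1$-Lipschitz bound) enters; the paper's compactness argument is shorter, dimension-free, and hides the degree-theoretic issue you isolate (existence, i.e.\ the exclusion of ``never meeting the graph'', is what the compactness argument gives for free).

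One statement in your covering-space bookkeeping is false, though harmlessly so: $\graph f$ is \emph{not} $\sigma$-invariant, i.e.\ $f$ is not antipodally equivariant. For $n\geq 3$ this is impossible for topological reasons: $\Lambda_\G\cong\Ss^{n-1}$ is simply connected, so $p^{-1}(\Lambda_\G)$ must be the disjoint union $\graph f\sqcup\sigma(\graph f)$, where $\sigma(\graph f)$ is the graph of $x\mapsto -f(-x)\neq f$ (and already in the Fuchsian case with $n=2$, where $f$ is constant, invariance fails). Consequently your claim ``$p(\g(t))\in\Lambda_\G$ if and only if $\g(t)\in\graph f$'' is wrong: the correct statement is that $p(\g(t))\in\Lambda_\G$ if and only if $\g(t)\in\graph f$ \emph{or} $\g(t+\pi)\in\graph f$. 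The identity you actually need, $p(\g)\cap\Lambda_\G=p(\g\cap\graph f)$, is nevertheless true, because it is the lifted photon, not the graph, that is $\sigma$-invariant: $\sigma$ acts on $\g$ as the half-period shift $t\mapsto t+\pi$, so $\g\cap\sigma(\graph f)=\sigma(\g\cap\graph f)$ has the same image under $p$, and your count of one point per period still descends to exactly one point downstairs. The same false premise underlies your parenthetical second justification of $k=0$ via $f(-x)=-f(x)$; your primary argument for $k=0$ (saturation of the Lipschitz bound contradicting strict decrease) is correct and is the one to keep.
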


\begin{proof}
Let $f \, :\, \Ss^{n-1} \tv \Ss^{1}$ be a distance-decreasing map such that the quotient by the antipodal map of its graph is $\Lambda_\G$. An isotropic geodesic can be parametrized by $\theta\mapsto(c(\theta), \theta)$, where $c : \Ss^1 \tv \Ss^{n-1} $ is a unit speed geodesic. Then the proposition is equivalent to the existence and uniqueness of a  fixed point for the map $f\circ c :\Ss^1\to\Ss^1$.

It is a simple exercise to show that a distance-decreasing map of a compact metric space to itself has a unique fixed point.  

\end{proof}

\section{The Zariski closure of \texorpdfstring{$\AdS$}{AdS} quasi-Fuchsian groups}
We prove in this section the Zariski density of $\AdS$ quasi-Fuchsian subgroups of $\PO(n,2)$ which are  not $\AdS$-Fuchsian. This result, which happens to be interesting in itself, will considerably simplify the proof of Theorem \ref{theorem-main} when we will use Benoist's Theorem \cite{benoist1997asymptotiques} about Jordan projections for discrete subgroups of semi-simple Lie groups in the last section.

\begin{lemma} \label{lem - fuchsien ou irreductible} Let $\G\subset \PO(n,2)$ be $\AdS$ quasi-Fuchsian. If $\G$ is reducible, then it is Fuchsian.
\end{lemma}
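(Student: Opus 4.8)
The plan is to prove the contrapositive: assuming $\G$ is reducible, I want to show it is $\AdS$-Fuchsian, which by Proposition \ref{prop-fuchsian est equivalent sous groupe de O(n,1)} amounts to conjugating $\G$ into $\rO(n,1)$. Since $\G$ is reducible, there is a proper nonzero linear subspace $W\subsetneq\R^{n+2}$ invariant under (a lift to $\rO(n,2)$ of) every element of $\G$. The key structural observation will be that the limit set $\Lambda_\G$, being the closure of attracting fixed points of proximal elements, forces the geometry of this invariant subspace: each $\g_+$ is an eigendirection of $\g$ that must interact with $W$. Concretely, I would first argue that $\Lambda_\G$ must be contained in the projectivization $\proj(W)$ of the invariant subspace, or in the boundary of an invariant sub-quadric — exploiting that a proximal element's attracting direction is either in $W$ or in an invariant complement, and minimality of the action (Proposition \ref{prop-action minimale sur l ensemble limite }) rules out a splitting of $\Lambda_\G$.

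\smallskip
Next I would analyze the restriction of the quadratic form $q_{n,2}$ to the invariant subspace $W$. The radical $W\cap W^{\perp}$ is $\G$-invariant, as is $W^\perp$; so by passing to these canonical invariant subspaces I may assume $W$ is nondegenerate (if the radical is nontrivial I handle it separately, showing it would produce an invariant isotropic line fixed by all of $\G$, contradicting that $\Lambda_\G\cong\Ss^{n-1}$ has no global fixed point for a nonelementary group). Once $W$ is nondegenerate of signature $(p,q)$ with $p+q<n+2$, the limit set $\Lambda_\G\subset\proj(W)\cap\partial\ADS^{n+1}$ must be a full $(n-1)$-sphere contained in an isotropic quadric of a smaller-signature form. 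A dimension/signature count then severely constrains $(p,q)$: the boundary quadric of $q_{p,q}$ can only carry a topological $(n-1)$-sphere as an essential limit set when the second negative direction is in fact unused, i.e. when the form effectively has signature $(n,1)$. This is where I expect to invoke that $\lambda_2(\g)=0$ for all $\g\in\G$ — translating the reducibility into the eigenvalue condition characterizing conjugacy into $\rO(n,1)$.

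\smallskip
The crucial bridge is the remark in the excerpt that a proximal $\g$ satisfies $\lambda_2(\g)=0$ if and only if $\g$ is conjugate to an element of $\rO(n,1)$, together with the constraint $\lambda_1+\lambda_{n+2}=\lambda_2+\lambda_{n+1}=0$ and $\lambda_3=\cdots=\lambda_n=0$. I would show that reducibility, via the invariant nondegenerate subspace of signature $(n,1)$, forces $\lambda_2(\g)=0$ simultaneously for every proximal $\g\in\G$, with a \emph{common} conjugating element. The uniformity is the subtle point: each individual proximal element being conjugate to $\rO(n,1)$ is not enough; I need a single invariant totally geodesic copy of $\Hyp^n$, equivalently a single invariant $(n,1)$-subspace. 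But this is exactly what the invariant $W$ provides once its signature is pinned down, so the group stabilizes $\proj(W)\cap\ADS^{n+1}\cong\Hyp^n$.

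\smallskip
I expect the main obstacle to be the signature analysis: ruling out the possibility that the invariant subspace $W$ has signature $(p,2)$ with $p<n$, which would a priori allow a lower-dimensional $\AdS$ geometry whose boundary quadric could still contain a topological sphere. The resolution should come from the fact that $\Lambda_\G$ is genuinely $(n-1)$-dimensional and, by Proposition \ref{prop-les geoedeisc de type lumiere intersect l ensemble limite}, meets every isotropic geodesic exactly once (a Cauchy/achronality condition). A limit set confined to the boundary of a space with a strictly smaller spacelike factor $\Ss^{p-1}$ cannot be an achronal $(n-1)$-sphere filling the $\Ss^{n-1}$ direction unless $p=n$, forcing exactly the signature $(n,1)$ splitting. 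Once that is secured, Proposition \ref{prop-fuchsian est equivalent sous groupe de O(n,1)} closes the argument: $\G$ preserves a totally geodesic $\Hyp^n$, hence is $\AdS$-Fuchsian, and by convex cocompactness acts cocompactly, making it a cocompact lattice of the conjugate $\rO(n,1)$.
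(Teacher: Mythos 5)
Your overall strategy is the same as the paper's: kill the radical of the invariant subspace, run a signature case analysis on a nondegenerate invariant subspace, and conclude that only signature $(n,1)$ survives, so that $\G$ preserves a totally geodesic copy of $\Hyp^n$ and is Fuchsian by definition. However, one step fails as stated: the claim that a nontrivial radical $W\cap W^\perp$ ``would produce an invariant isotropic line fixed by all of $\G$''. In $\rO(n,2)$ a totally isotropic invariant subspace can have dimension $2$, and the stabilizer of an isotropic plane acts on that plane through all of $\mathrm{GL}_2(\R)$ (it is a parabolic with Levi factor $\mathrm{GL}_2(\R)\times \rO(n-2)$); in particular a group preserving $W\cap W^\perp$ can act on the null geodesic $\proj(W\cap W^\perp)\subset\partial\ADS^{n+1}$ like a rotation group, with no invariant line. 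So linear algebra alone does not give you the fixed point. The missing idea --- and it is exactly what the paper uses here --- is Proposition \ref{prop-les geoedeisc de type lumiere intersect l ensemble limite}: the invariant null geodesic $\proj(W\cap W^\perp)$ meets $\Lambda_\G$ in exactly one point, which is therefore a global fixed point \emph{inside} $\Lambda_\G$, contradicting minimality (Proposition \ref{prop-action minimale sur l ensemble limite }). You have this proposition in hand (you cite it later for the signature count) but never deploy it where it is actually indispensable. (Note also that in the one-dimensional case the fixed point need not lie in $\Lambda_\G$, so ruling it out requires a further dynamical argument; the paper is equally terse on this point.)

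Apart from this, your sketch is close to the paper's proof, with two differences worth recording. For an invariant subspace of signature $(p,2)$ with $p<n$, the paper argues via the convex hull: $\Lambda_\G\subset\partial X$ would force the convex hull of $\Lambda_\G$ into the totally geodesic $\ADS^{p+1}$, contradicting that this convex hull has nonempty interior (Barbot--M\'erigot). Your alternative --- $\Lambda_\G$ is a graph over all of $\Ss^{n-1}$ by Proposition \ref{prop-l'ensemble limite est le graph d'une fonction lip}, while $\partial X$ projects into a proper subsphere $\Ss^{p-1}$ --- is a correct substitute, but only if you phrase it through the surjectivity of the graph projection rather than through ``every null geodesic meets $\Lambda_\G$'': when $p=n-1$ the subspace $W$ has dimension $n+1$, so every isotropic $2$-plane intersects it nontrivially and every null geodesic of $\partial\ADS^{n+1}$ does meet $\partial X$; the Cauchy property alone cannot conclude in that case. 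Finally, the detour through $\lambda_2(\g)=0$ and a ``common conjugating element'' is unnecessary, as you yourself observe at the end: an invariant subspace of signature $(n,1)$ directly gives an invariant totally geodesic $\Hyp^n$, which is the definition of $\AdS$-Fuchsian.
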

\begin{proof} Assume that $\G$ is not Fuchsian, and  let $V\subset \R^{n+2}$ be a $\G$-invariant subspace with $0<\dim(V)<n+2$.

 First, let us show that the restriction of $\pscalb{\cdot}{\cdot}$ to $V$ is non degenerate. Assume that it is not the case. Then $\G$  preserves the totally isotropic space $V\cap V^\perp$. It has dimension $1$ or $2$. If $\dim(V\cap V^\perp)=1$, then  $\proj(V\cap V^\perp)$ is a global fixed point for the action of $\G$ on  $\partial\ADS^{n+1}$, which cannot exist. The case $\dim(V\cap V^\perp)=2$ is impossible because it also implies the existence of a global fixed point  on  $\partial\ADS^{n+1}$ (the intersection of the null geodesic $\proj(V\cap V^\perp)$ of $\partial\ADS^{n+1}$ with $\Lambda_\G$ given by Proposition \ref{prop-les geoedeisc de type lumiere intersect l ensemble limite}).
 
 Thus the restriction of $\pscalb{\cdot}{\cdot}$ to $V$ is  non degenerate. It can have signature $(k,2)$, $(k,1)$ or $(k,0)$ (where $k\geq 0$ is the number of positive signs).
 
 In the first case, $\G$ acts on some totally geodesic copy $X$ of $\ADS^{k+1}$ (with $k<n$) in $\ADS^{n+1}$ (defined as $X=\proj(V)\cap\ADS^{n+1}$). Then $\partial X\cap \Lambda_\G$ is a non empty closed invariant subset of $\Lambda_\G$, hence $\Lambda_\G\subset \partial X$ and $C(\Lambda_\G)\subset X$. Since $C(\Lambda_\G)$ has non empty interior in $\ADS^{n+1}$ (Lemma 3.13 in \cite{merigot2012anosov}), we see that $X=\ADS^{n+1}$, i.e. $V=\R^{n+2}$, which is absurd.

Now assume that $V$ has Lorentzian signature $(k,1)$. Then $\G$ preserves $X=\proj(V)\cap \ADS^{n+1}$ which is a totally geodesic copy of $\h^k$. It also acts on $X'=\proj(V^\perp)\cap \ADS^{n+1}$ which is a totally geodesic copy of $\h^{k'}$ (with $k+k'=n$). Considering a proximal element $\g\in \G$, there is a point in  $\partial X\cup \partial X'$ which is transverse to the repelling fixed point $\g_-$ of $\g$ (otherwise $\g_-$ would be in $V\cap V^\perp$). This implies that $\g_+\in \partial X\cup\partial X'$, hence $\Lambda_\G\cap \partial X\neq \emptyset$ or $\Lambda_\G\cap \partial X'\neq \emptyset$. The action of $\G$ on $\Lambda_\G$ being minimal, we find that $\Lambda_\G\subset \partial X$ or $\Lambda_\G\subset \partial X'$. This is impossible because $\Lambda_\G$ is homeomorphic to $\Ss^{n-1}$ and $\partial X$ (resp. $\partial X'$) is homeomorphic to $\Ss^{k-1}$ (resp. $\Ss^{k'-1}$).

 Finally, if $V$ is  positive definite, then $V^\perp$ has signature $(n-k,2)$, this case has already been ruled out. 
\end{proof}

\begin{coro} \label{coro - composante neutre irreductible} If a subgroup $\G\subset \PO(n,2)$ is $\AdS$ quasi-Fuchsian but not $\AdS$-Fuchsian, then the identity component of the Zariski closure of $\G$ acts irreducibly on $\R^{n+2}$.
\end{coro}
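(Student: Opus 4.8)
The plan is to deduce Corollary~\ref{coro - composante neutre irreductible} from Lemma~\ref{lem - fuchsien ou irreductible} by transferring the irreducibility statement from the group $\G$ to the identity component $G_0$ of its Zariski closure $G=\overline{\G}^Z$. The key point is that Lemma~\ref{lem - fuchsien ou irreductible} tells us that a non-Fuchsian $\AdS$-quasi-Fuchsian $\G$ acts \emph{irreducibly} on $\R^{n+2}$, and I want to promote this to irreducibility of the connected algebraic group $G_0$. First I would record that $G_0$ is a finite-index (indeed, finite cokernel) normal subgroup of $G$, and that $\G\subset G$ since $\G$ lies in its own Zariski closure; moreover any $\G$-invariant subspace is automatically $G$-invariant because the stabiliser of a subspace is Zariski closed, so $\G$ and $G$ have exactly the same invariant subspaces, and in particular $G$ acts irreducibly.

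The main step, and the one I expect to be the genuine obstacle, is passing from irreducibility of $G$ to irreducibility of the identity component $G_0$. This is a standard but not entirely automatic fact from the representation theory of algebraic groups: a representation of an algebraic group $G$ that is irreducible need not restrict to an irreducible representation of $G_0$, but it does restrict to a \emph{semisimple} (completely reducible) one, whose isotypic pieces are permuted transitively by the finite quotient $G/G_0$ (Clifford theory). So the danger is that $\R^{n+2}$ could decompose as a direct sum of several $G_0$-invariant subspaces permuted by the component group. The strategy to exclude this is to invoke the geometry already used in Lemma~\ref{lem - fuchsien ou irreductible}: a $G_0$-invariant subspace $V$ is invariant under a finite-index subgroup $\G_0:=\G\cap G_0$ of $\G$, and $\G_0$ is itself $\AdS$-quasi-Fuchsian with the same limit set (a finite-index subgroup of a word-hyperbolic convex-cocompact group shares its limit set, since the orbit closures agree). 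Thus the same argument as in Lemma~\ref{lem - fuchsien ou irreductible} --- ruling out invariant subspaces by the minimality of the action on $\Lambda_{\G_0}=\Lambda_\G$ and the dimension/sphere obstructions --- applies verbatim to $\G_0$, provided $\G_0$ is still non-Fuchsian.

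Hence the plan reduces to verifying that $\G_0=\G\cap G_0$ is non-Fuchsian. I would argue by contradiction: if $\G_0$ were Fuchsian, then by Proposition~\ref{prop-fuchsian est equivalent sous groupe de O(n,1)} it would be conjugate to a cocompact lattice of $\rO(n,1)$, preserving a totally geodesic $\Hyp^n$; but $\G_0$ is normal of finite index in $\G$, and its limit set $\Lambda_{\G_0}=\Lambda_\G$ would then be a geometric sphere $\partial H$, forcing $\Lambda_\G$ to be the boundary of a totally geodesic copy of $\Hyp^n$. Since $\G$ normalises $\G_0$ and preserves $\Lambda_\G=\partial H$, it preserves the convex hull $H$ itself, so $\G$ would be $\AdS$-Fuchsian, contradicting the hypothesis. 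Therefore $\G_0$ is non-Fuchsian, Lemma~\ref{lem - fuchsien ou irreductible} (applied to $\G_0$) gives irreducibility of the $\G_0$-action, and since every $G_0$-invariant subspace is $\G_0$-invariant, $G_0$ acts irreducibly on $\R^{n+2}$. The delicate point to get right is the Clifford-theoretic bookkeeping together with the claim $\Lambda_{\G_0}=\Lambda_\G$, which is where I would spend the most care.
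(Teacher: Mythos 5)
Your proposal is correct and follows essentially the same route as the paper: the paper's (very terse) proof also applies Lemma \ref{lem - fuchsien ou irreductible} to the finite-index subgroup $\G\cap G_0$, using that its limit set equals $\Lambda_\G$ to rule out Fuchsian-ness, and then transfers irreducibility to $G_0$ by inclusion. Your write-up simply makes explicit the steps the paper leaves implicit (equality of limit sets, the contradiction showing $\G\cap G_0$ is not Fuchsian, and the passage from $\G\cap G_0$ to $G_0$); the Clifford-theory discussion is harmless but not needed, since the argument never passes through irreducibility of the full Zariski closure.
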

\begin{proof} Let $\G_\circ\subset G$ be a finite index subgroup. Since $\Lambda_{\G_\circ}=\Lambda_\G$, it cannot be Fuchsian, so it acts irreducibly on $\R^{n+2}$ by Lemma \ref{lem - fuchsien ou irreductible}.
\end{proof}

\begin{prop} \label{prop - fuchsien ou zariski dense} Let $\G\subset \PO(n,2)$ be $\AdS$ quasi-Fuchsian. If $\G$ is not $\AdS$-Fuchsian, then it is Zariski dense in $\PO(n,2)$.
\end{prop}
\begin{proof} Let $G\subset \SO_0(n,2)$ be the pre-image by the quotient map $\SO_0(n,2)\to\PO(n,2)$ of the identity component  of the Zariski closure of $\G$, and assume that $\G$ is not Fuchsian.

By Corollary \ref{coro - composante neutre irreductible}, we know that $G$ acts irreducibly on $\R^{n+2}$. According to \cite[Theorem 1]{discala} the only connected irreducible subgroups of $\SO(n,2)$ other than $\SO_0(n,2)$ are  $\mathrm{U}(\frac{n}{2},1)$, $\mathrm{SU}(\frac{n}{2},1)$, $\Ss^1.\SO_0(\frac{n}{2},1)$ (when $n$ is even) and $\SO_0(2,1)$ (when $n=3$).
 
 The first three cases are subgroups of   $\mathrm{U}(\frac{n}{2},1)$, which only contains elements $\g\in \SO(n,2)$ satisfying $\lambda_1(\g)=\lambda_2(\g)$ so $G$ cannot be one of them (otherwise $\G$ would not contain any proximal element and $\Lambda_\G=\emptyset$).
 
 The irreducible copy of $\SO_0(2,1)$ in $\SO(3,2)$ can also be ruled out because a quasi-Fuchsian subgroup of $\PO(3,2)$ has cohomological dimension $3$, so it cannot be isomorphic to a discrete subgroup of $\SO_0(2,1)\approx \PSL(2,\R)$.
 
 The only possibility left is that $\G$ is Zariski dense in $\PO(n,2)$.

\end{proof}

\section{Non differentiability of limit sets} 
We finally prove the main result, Theorem \ref{theorem-main}. The proof goes as follows: first, we prove that the tangent spaces of the limit set are space like (i.e. positive definite for the natural Lorentzian conformal structure on $\partial \ADS^{n+1}$).  Then by an algebraic argument, this shows that all proximal elements of $\G$ are conjugate (by an \textit{a priori} different element of $\PO(n,2)$) to an element of $\rO(n,1)$. Finally, using a famous theorem of Benoist, this implies that $\G$ is not Zariski-dense, and therefore by Proposition \ref{prop - fuchsien ou zariski dense intro} that the group is Fuchsian.

\subsection{Spacelike points}
\begin{lemma} \label{lem - point spacelike} If $\G\subset \PO(n,2)$ is $\AdS$ quasi-Fuchsian and $\Lambda_\G$ is a $\mathcal C^1$ submanifold of $\partial\ADS^{n+1}$, then there is $\xi\in \Lambda_\G$ such that $T_\xi\Lambda_\G$ is spacelike.
\end{lemma}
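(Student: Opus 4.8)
===

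The plan is to prove that the limit set $\Lambda_\G$, which by Proposition \ref{prop-l'ensemble limite est le graph d'une fonction lip} lifts to the graph of a distance-decreasing map $f : \Ss^{n-1} \to \Ss^1$, has at least one point where its tangent space is spacelike for the conformal Lorentzian structure on $\partial\ADS^{n+1}$. Recall that in the model $\Ss^{n-1}\times\Ss^1$ with metric $[g_{\Ss^{n-1}} - d\theta^2]$, the tangent space $T_\xi\Lambda_\G$ at a point $\xi = (x, f(x))$ is the graph of the differential $d_xf : T_x\Ss^{n-1} \to T_{f(x)}\Ss^1$, and spacelikeness amounts to positive definiteness of the restricted form, i.e. to the condition $\|d_xf(v)\|^2 < \|v\|^2$ for all nonzero tangent vectors $v$ — equivalently the operator norm of $d_xf$ is strictly less than $1$ at $x$. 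Since $\Lambda_\G$ is assumed to be a $\cC^1$ submanifold, $f$ is $\cC^1$ and $d_xf$ depends continuously on $x$. So the goal reduces to finding one point $x$ where $\|d_xf\|_{\mathrm{op}} < 1$.

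The key idea I would use is to extract this from the \emph{global} distance-decreasing property together with compactness. First I would observe that distance-decreasing ($d(f(x),f(y)) < d(x,y)$ for $x \neq y$) forces the local dilation to be at most $1$: since $f$ is $\cC^1$, one has $\|d_xf\|_{\mathrm{op}} \leq 1$ everywhere (as a differential is a limit of difference quotients which are all $\leq 1$ by the $1$-Lipschitz bound that the strict inequality refines). The substantive step is to promote a strict global inequality to a strict pointwise inequality at some point. The natural approach is to consider the continuous function $x \mapsto \|d_xf\|_{\mathrm{op}}$ on the compact sphere $\Ss^{n-1}$ and argue by contradiction: if it were identically equal to $1$, then $f$ would be locally $1$-dilating in some direction everywhere, and I would try to integrate this along a well-chosen path to contradict the strict global distance decrease. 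Concretely, if $\|d_xf\|_{\mathrm{op}} = 1$ at every $x$, pick at each $x$ a unit vector $v_x$ with $\|d_xf(v_x)\| = 1$; following the integral-type estimate along a geodesic whose tangent stays close to the maximizing direction should produce two points $x \neq y$ with $d(f(x),f(y))$ arbitrarily close to $d(x,y)$, and with more care actually equal, contradicting strict decrease.

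A cleaner route, which I would prefer, is to use the fixed-point structure already available from Proposition \ref{prop-les geoedeisc de type lumiere intersect l ensemble limite}: each isotropic geodesic $(c(\theta),\theta)$ meets $\Lambda_\G$ at exactly one point, namely the unique fixed point of $f\circ c : \Ss^1 \to \Ss^1$. At such a fixed point $\theta_0$, the map $f \circ c$ is a distance-decreasing self-map of $\Ss^1$ fixing $\theta_0$, so its derivative there satisfies $|(f\circ c)'(\theta_0)| \leq 1$. I would examine the derivative of $f\circ c$ at its fixed point: because the fixed point is unique and the map is strictly distance-decreasing, $f\circ c$ cannot have derivative of modulus $1$ at the fixed point (otherwise one builds nearby almost-fixed points contradicting uniqueness, or contradicts the strict contraction). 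This gives $|d_{x_0}f(\dot c)| < 1$ in the direction $\dot c$ of the geodesic. The remaining task is to upgrade this single-direction strict bound to spacelikeness of the whole tangent plane, i.e. to get the strict inequality in \emph{all} directions at one point; one handles this by varying the isotropic geodesic (equivalently varying the direction of $c$ through the same base point) and using continuity/compactness of the $\cC^1$ data to find a common point where the operator norm is strictly below $1$.

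The main obstacle I anticipate is precisely this last upgrade: having the contraction strictly below $1$ in the single direction tangent to an isotropic geodesic is easy, but spacelikeness requires it in every direction of $T_\xi\Lambda_\G$ simultaneously. The delicate point is that a priori the worst direction (where $\|d_xf\|_{\mathrm{op}}$ is attained) need not be the isotropic-geodesic direction, so I must rule out the possibility that $\|d_xf\|_{\mathrm{op}} = 1$ in some transverse direction at every point. I expect this is where the global topology of $\Lambda_\G \cong \Ss^{n-1}$ and a compactness argument (an extremal point of the function $x \mapsto \|d_xf\|_{\mathrm{op}}$, or a maximum-principle-type argument for where $f$ would have to be isometric) must be invoked, possibly together with the fact that a spacelike set of directions is open and the lightcone structure is convex. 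In short, the conceptual content is a contraction argument, but the technical heart is converting directional contraction into full spacelikeness at a single well-chosen point.
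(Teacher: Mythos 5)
Your setup and reduction are correct (spacelikeness of $T_\xi\Lambda_\G$ at $\xi=(x,f(x))$ is exactly $\|d_xf\|_{\mathrm{op}}<1$, and $f$ is $\cC^1$ with $\|d_xf\|_{\mathrm{op}}\leq 1$ everywhere), but neither of your two routes closes, and the paper's proof rests on one idea you did not find. The key observation is that a strictly distance-decreasing map $f:\Ss^{n-1}\to\Ss^{1}$ \emph{cannot be onto}: if two antipodal points of $\Ss^{1}$ were both values of $f$, their preimages $x\neq y$ would satisfy $d(f(x),f(y))=\pi\geq d(x,y)$, contradicting strict decrease. Hence $f$ misses a point of the circle and can be viewed as a $\cC^1$ function $f:\Ss^{n-1}\to\R$. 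At a point $x$ where this function attains its maximum one has $d_xf=0$, so the tangent space to the graph at $(x,f(x))$ is $T_x\Ss^{n-1}\times\{0\}$, which is spacelike. This gives much more than $\|d_xf\|_{\mathrm{op}}<1$ at one point --- it gives a point where the differential vanishes --- and it completely bypasses the ``directional contraction versus full spacelikeness'' difficulty that you correctly identify as the technical heart and then leave unresolved.

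Beyond the missing idea, both of your concrete arguments contain errors. In your second route, the claim that a strictly distance-decreasing self-map of $\Ss^1$ cannot have derivative of modulus $1$ at its unique fixed point is false: in a local chart around the fixed point, the map $t\mapsto t-t^3$ is strictly distance-decreasing (since $|g(t)-g(s)|=|t-s|\,|1-(t^2+ts+s^2)|<|t-s|$ for $t\neq s$ small) yet has derivative exactly $1$ at $0$; uniqueness of the fixed point is compatible with $|(f\circ c)'(\theta_0)|=1$, so even your single-direction strict inequality does not follow. In your first route, producing pairs $x\neq y$ with $d(f(x),f(y))$ ``arbitrarily close'' to $d(x,y)$ is not a contradiction: strict inequality for each fixed pair is compatible with the defect $d(x,y)-d(f(x),f(y))$ having infimum zero over all pairs (this always happens as $x\to y$ near a point with $\|d_xf\|_{\mathrm{op}}=1$), and upgrading ``arbitrarily close'' to ``equal'' is precisely what would need proof. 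Moreover, the maximizing directions $v_x$ need not form a continuous field, and integrating along a curve only bounds $d(f(x),f(y))$ by the length of the image curve, which compares to the length of the chosen curve rather than to $d(x,y)$ unless that curve is a minimizing geodesic whose tangent is exactly (not approximately) maximizing.
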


\begin{proof} Let $f \, :\, \Ss^{n-1} \tv \Ss^{1}$ be a distance-decreasing map such that the quotient by the antipodal map of its graph is $\Lambda_\G$.

Knowing that the graph of $f$ is a $\cC^1$-submanifold, we first want to show that $f$ is $\cC^1$. Using the Implicit Function Theorem, it is enough to know that the tangent space of the graph projects non trivially to the tangent space of $\Ss^{n-1}$. This is true because $f$ is Lipschitz.

Since $f$ satisfies $d(f(x),f(y))<d(x,y)$ for $x\neq y$ (Proposition \ref{prop-l'ensemble limite est le graph d'une fonction lip}), it cannot be onto, so it can be seen as a function $f:\Ss^{n-1}\to \R$. At a point $x\in\Ss^{n-1}$ where it reaches its maximum, it satisfies $df_x=0$, so the tangent space to $\Lambda_\G$ at $(x,f(x))$ is $T_x\Ss^{n-1}\times \{0\}$, which is spacelike.
\end{proof}

\begin{coro} \label{coro - partout spacelike}If $\G\subset \PO(n,2)$ is $\AdS$ quasi-Fuchsian and $\Lambda_\G$ is a $\mathcal C^1$ submanifold of $\partial\ADS^{n+1}$, then for all $\xi\in \Lambda_\G$, the tangent space $T_\xi\Lambda_\G$ is spacelike.

\end{coro}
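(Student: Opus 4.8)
The goal is to upgrade Lemma \ref{lem - point spacelike}, which produces a single point where the tangent space is spacelike, to the statement that \emph{every} tangent space is spacelike. The plan is to exploit the dynamics of the group action together with the fact that being spacelike is an open condition.

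First I would record the two key facts available. From Lemma \ref{lem - point spacelike} there exists at least one $\xi_0\in\Lambda_\G$ with $T_{\xi_0}\Lambda_\G$ spacelike. From Proposition \ref{prop-action minimale sur l ensemble limite } the action of $\G$ on $\Lambda_\G$ is minimal, so the orbit $\G\cdot\xi_0$ is dense in $\Lambda_\G$. The natural strategy is to show that the set $S=\{\xi\in\Lambda_\G : T_\xi\Lambda_\G \text{ is spacelike}\}$ is both $\G$-invariant and closed, and that these two properties force $S=\Lambda_\G$.

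For $\G$-invariance: each $\g\in\G$ acts on $\partial\ADS^{n+1}$ as a conformal Lorentzian diffeomorphism (the Lorentzian conformal structure is $\PO(n,2)$-invariant), so $\g$ preserves the causal character of tangent directions. Since $\g$ maps $\Lambda_\G$ to itself diffeomorphically (it is a $\cC^1$ submanifold and $\g$ is a diffeomorphism of the ambient space preserving $\Lambda_\G$), its differential carries $T_\xi\Lambda_\G$ to $T_{\g\xi}\Lambda_\G$ and preserves the property of being spacelike. Hence $\g\cdot S=S$. For closedness I would argue that being spacelike is an open condition on tangent planes: the restriction of the conformal metric to $T_\xi\Lambda_\G$ is positive definite, and positive definiteness is an open condition; since $\Lambda_\G$ is a $\cC^1$ submanifold, the tangent plane field $\xi\mapsto T_\xi\Lambda_\G$ varies continuously, so $S$ is open in $\Lambda_\G$; its complement (where the tangent plane is timelike or lightlike) is therefore closed, i.e.\ $S$ is open. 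Now $S$ is a nonempty, open, $\G$-invariant subset of $\Lambda_\G$. Its closure $\overline{S}$ is a nonempty closed $\G$-invariant set, so by minimality $\overline{S}=\Lambda_\G$. To conclude that $S$ itself is everything, I would use that $S\supset\G\cdot\xi_0$ is dense, together with openness: for any $\eta\in\Lambda_\G$, by density there is $\g\in\G$ with $\g\xi_0$ arbitrarily close to $\eta$, and since spacelikeness is open and $\G$-invariant I can transport the spacelike condition; more directly, because $S$ is open and $\G$-invariant and meets every orbit (the dense orbit lies in $S$), every point has a neighbourhood-worth of spacelike directions, and the only way an open $\G$-invariant set can fail to be all of $\Lambda_\G$ is if its complement is a nonempty $\G$-invariant set, which by minimality would be dense, contradicting openness of $S$ unless the complement is empty.

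The main obstacle I anticipate is the last logical step: openness plus nonemptiness plus $\G$-invariance does not by itself give everything under mere minimality, since minimality only forbids \emph{closed} invariant proper subsets. The clean way around this is to show the complement $\Lambda_\G\setminus S$ is closed and $\G$-invariant and then invoke minimality on \emph{it}: if it were nonempty it would be dense, contradicting that $S$ is a nonempty open set. So the cleanest phrasing is: $S$ open, $\G$-invariant, nonempty; the complement is closed and $\G$-invariant; if the complement were nonempty, minimality would force it to be dense, which is incompatible with $S$ being open and nonempty. Therefore the complement is empty and $S=\Lambda_\G$. The only genuinely technical point to verify carefully is the continuity of the tangent-plane field and the invariance of causal type under the conformal action, both of which are standard once one recalls that $\Lambda_\G$ is $\cC^1$ and that $\PO(n,2)$ acts conformally on $\partial\ADS^{n+1}$.
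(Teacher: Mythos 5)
Your proof is correct and follows essentially the same route as the paper: you consider the set of points with spacelike tangent space, observe it is open, nonempty (by Lemma \ref{lem - point spacelike}) and $\G$-invariant, and conclude by minimality of the action (Proposition \ref{prop-action minimale sur l ensemble limite }) applied to the closed invariant complement. Your self-correction at the end is exactly right — minimality must be invoked on the closed complement, not on the closure of the open set — and this is precisely the logic implicit in the paper's shorter argument.
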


\begin{proof} Let $E:=\{\xi\in\Lambda_\G : T_\xi\Lambda_\G\textrm{ is spacelike} \}$. Then $E$ is open and $\G$-invariant. Since the action of $\G$ on $\Lambda_\G$ is conjugate to the action on its Gromov boundary, it is minimal (i.e. all orbits are dense). It follows that $E$ is either empty or equal to $\Lambda_\G$ and by Lemma \ref{lem - point spacelike}, it is not empty.
\end{proof}

\paragraph{Remark:} Lemma \ref{lem - point spacelike}   fails in general in higher rank pseudo-Riemannian symmetric spaces, i.e. for $\h^{p,q}$ quasi-Fuchsian groups. Indeed,  Hitchin representations  in $\PO(2,3)$ provide $\h^{2,2}$ quasi-Fuchsian groups which are not $\h^{2,2}$-Fuchsian, yet have a $\cC^1$ limit set (which is isotropic for the natural Lorentzian conformal structure on $\partial\h^{2,2}$). 


\subsection{Fixed points and Benoist's asymptotic cone}
\begin{lemma} \label{lem - conjugaison elements} Let $\G\subset \PO(n,2)$ be $\AdS$ quasi-Fuchsian. If the limit set $\Lambda_\G\subset \partial \ADS^{n+1}$ is a $\cC^1$ submanifold, then every proximal element $\g\in\G$ is conjugate in $\PO(n,2)$ to an element of $\mathrm O(n,1)$.
\end{lemma}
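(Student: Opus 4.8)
The goal is to prove that every proximal $\g\in\G$ satisfies $\lambda_2(\g)=0$; by the observation recorded just before Proposition \ref{prop-fuchsian est equivalent sous groupe de O(n,1)}, this is exactly the statement that $\g$ is conjugate in $\PO(n,2)$ to an element of $\rO(n,1)$. So I would fix a proximal $\g$, with lift $\hat\g\in\rO(n,2)$ realizing $e^{\lambda_1(\g)}$, and suppose for contradiction that $\lambda_2:=\lambda_2(\g)>0$. Since $\g$ fixes its attracting point $\g_+\in\Lambda_\G$ and preserves $\Lambda_\G$, the differential $d\g_{\g_+}$ stabilises $T_{\g_+}\Lambda_\G$, and by Corollary \ref{coro - partout spacelike} this tangent space is a spacelike hyperplane of $T_{\g_+}\partial\ADS^{n+1}$.

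The computational heart is to describe $d\g_{\g_+}$ together with the conformal Lorentzian structure in terms of the spectrum of $\hat\g$. Writing $\g_+=[v]$ with $\hat\g v=e^{\lambda_1}v$, I would identify $T_{\g_+}\partial\ADS^{n+1}$ with $v^\perp/\langle v\rangle$, onto which $\pscalb{\cdot}{\cdot}$ descends and represents the conformal Lorentzian structure of the boundary. Using $\lambda_1+\lambda_{n+2}=\lambda_2+\lambda_{n+1}=0$ and $\lambda_3=\dots=\lambda_n=0$, the form pairs the modulus-$e^{\lambda_1}$ line $\langle v\rangle$ with the modulus-$e^{-\lambda_1}$ line (so the latter is the only block absent from $v^\perp$), pairs the modulus-$e^{\lambda_2}$ block with the modulus-$e^{-\lambda_2}$ block into a neutral plane, and is positive definite on the modulus-$1$ block. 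Thus $v^\perp/\langle v\rangle$ splits $\hat\g$-invariantly as a Lorentzian plane (the images of the $e^{\pm\lambda_2}$ blocks) plus a positive-definite subspace of dimension $n-2$ (the image of the modulus-$1$ block), and on these pieces $d\g_{\g_+}$ has moduli $e^{\lambda_2-\lambda_1}$, $e^{-\lambda_2-\lambda_1}$ and $e^{-\lambda_1}$ respectively; in particular $d\g_{\g_+}$ is conformal with factor $\rho=e^{-2\lambda_1}$.

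With this in hand the contradiction is immediate. As $T_{\g_+}\Lambda_\G$ is a spacelike hyperplane, its $\pscalb{\cdot}{\cdot}$-orthogonal is a $d\g_{\g_+}$-invariant timelike line, hence is spanned by a timelike eigenvector $w$ with a real eigenvalue $\mu$. Conformality gives $\mu^2\pscalb{w}{w}=\rho\,\pscalb{w}{w}$, and since $\pscalb{w}{w}<0$ this forces $|\mu|=\sqrt\rho=e^{-\lambda_1}$. But under the assumption $\lambda_2>0$ the three moduli $e^{\lambda_2-\lambda_1}>e^{-\lambda_1}>e^{-\lambda_2-\lambda_1}$ are pairwise distinct, so the eigenspace of modulus $e^{-\lambda_1}$ is exactly the positive-definite block above and contains no timelike vector, contradicting the existence of $w$. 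Hence $\lambda_2=0$, as desired.

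The one delicate step is the second paragraph: one must transport the spectral data of $\hat\g$ on $\R^{n+2}$ to the differential on the tangent space of the quadric and correctly match each modulus-graded block with its causal type (null, spacelike or timelike) for $\pscalb{\cdot}{\cdot}$; once the splitting, the moduli and the conformal factor are pinned down, the timelike-eigenline argument closes the proof.
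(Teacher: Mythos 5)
Your spectral computation is sound, and it is a genuinely different endgame from the paper's: identifying $d\g_{\g_+}$ with $e^{-\lambda_1(\g)}$ times the map induced by $\hat\g$ on $\g_+^\perp/\g_+$, splitting this quotient into a hyperbolic plane (images of the modulus-$e^{\pm\lambda_2}$ lines) plus a positive definite block of dimension $n-2$, reading off the conformal factor $e^{-2\lambda_1(\g)}$, and deriving a contradiction from a timelike eigenvector whose eigenvalue must have modulus $e^{-\lambda_1(\g)}$ -- all of this is correct, and it does prove that $\lambda_2(\g)=0$ for every proximal $\g\in\G$. The paper never passes through eigenvalue moduli: it lifts the invariant timelike line $(T_{\g_+}\Lambda_\G)^\perp$ to a $\hat\g$-invariant plane $V\subset\g_+^\perp$ containing $\g_+$ and a negative vector, writes $\hat\g|_V$ in a basis $(u,v)$ with $u\in\g_+$, $\pscalb{v}{v}=-1$ as a triangular matrix with diagonal $(e^{\lambda_1(\g)},\pm 1)$, and observes that the $\pm1$-eigendirection is negative, i.e. $\g$ fixes a point of $\ADS^{n+1}$, whose stabilizer is a conjugate of $\rO(n,1)$.

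The genuine gap is your very last step. You convert $\lambda_2(\g)=0$ into the conjugacy statement by citing the remark preceding Proposition \ref{prop-fuchsian est equivalent sous groupe de O(n,1)}, but that remark is false in the direction you need once $n\geq 3$. Indeed, write $\R^{n+2}=L_+\oplus W\oplus L_-$ with $L_\pm$ isotropic lines spanning a hyperbolic plane and $W=(L_+\oplus L_-)^\perp$ of signature $(n-1,1)$, and let $\hat\g$ act by $e^{\lambda_1}$ on $L_+$, by $e^{-\lambda_1}$ on $L_-$, and by a unipotent $P\neq\Id$ of $\rO(W)\cong\rO(n-1,1)$ on $W$. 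This element is proximal with $\lambda_2=0$, yet it fixes no negative line (a unipotent element of $\rO(n-1,1)$ fixing a timelike vector would act orthogonally, hence trivially, on its positive definite orthogonal complement, forcing $P=\Id$), so it fixes no point of $\ADS^{n+1}$ and is not conjugate into the image of $\rO(n,1)$, all of whose elements fix a negative line. The paper only ever uses the true direction of the remark (conjugate into $\rO(n,1)$ implies $\lambda_2=0$). So as written you have proved $\lambda_2(\g)=0$ -- which, incidentally, is all the final theorem actually needs, since it already places the Jordan projections on a half-line for Benoist's theorem \cite{benoist1997asymptotiques} -- but not the lemma as stated. The fix lies inside your own argument: the timelike eigenvector $w$ of $d\g_{\g_+}$ exists independently of the hypothesis $\lambda_2>0$; take the preimage of $\R w$ under $\g_+^\perp\to\g_+^\perp/\g_+$, which is a $\hat\g$-invariant $2$-plane containing $\g_+$ and a negative direction, and run the $2\times 2$ computation above to produce a negative eigendirection with eigenvalue $\pm 1$, hence a fixed point of $\g$ in $\ADS^{n+1}$ and the desired conjugacy. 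That extra step is precisely the paper's proof.
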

\begin{proof} Let $\g\in\G$ be proximal, and let $\hat\g\in \rO(n,2)$ be the lift with eigenvalue $e^{\lambda_1(\g)}$. Let $\g_+\in\Lambda_\G$ be the attracting fixed point. Then the differential of $\g$ acting on $\partial\ADS^{n+1}$ at $\g_+$ preserves $T_{\g_+}\Lambda_\G$. It also preserves $(T_{\g_+}\Lambda_\G)^\perp$, which is a timelike line because of Corollary \ref{coro - partout spacelike}.

Lifting everything to $\R^{n+2}$ and using the identification of $T_{\g_+}\partial\ADS^{n+1}$ with $\g_+^\perp/\g_+$, we see that $\hat\g$ preserves a two-dimensional plane $V\subset \g_+^\perp$ which contains $\g_+$ and a negative direction. Let $(u,v)$ be a basis of $V$, where $u\in\g_+$ and $\pscalb{v}{v}=-1$.

By writing $\hat\gamma v = au+bv$, we find that $b^2=-\pscalb{\hat\gamma v}{\hat\gamma v}=-\pscalb{v}{v}=1$. So the matrix of the restriction of $\hat\g$ to $P$ in the basis $(u,v)$ has the form
\[\begin{pmatrix}
e^{\lambda_1(\g)} & a\\
0 & \pm 1
\end{pmatrix}\]

It has $\pm 1$ as an eigenvalue, and the eigendirection is in $V$ but is not $\g_+$ (because $\lambda_1(\g)>0$), so it is negative for $\pscalb{\cdot}{\cdot}$. This eigendirection is a point of $\ADS^{n+1}$ fixed by  $\g$, and Proposition \ref{prop - element conjugue point fixe} implies that $\g$ is conjugate to an element of $\rO(n,1)$.
\end{proof}

\begin{theo} Let $\G\subset \PO(n,2)$ be $\AdS$ quasi-Fuchsian. If the limit set $\Lambda_\G\subset \partial \ADS^{n+1}$ is a $\cC^1$ submanifold, then $\G$ is Fuchsian.
\end{theo}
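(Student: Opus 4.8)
The plan is to derive the conclusion by contradiction, combining Lemma~\ref{lem - conjugaison elements} with Proposition~\ref{prop - fuchsien ou zariski dense} through Benoist's theorem on Jordan projections \cite{benoist1997asymptotiques}. Assume that $\Lambda_\G$ is a $\cC^1$ submanifold and suppose, for contradiction, that $\G$ is not Fuchsian. By Proposition~\ref{prop - fuchsien ou zariski dense}, $\G$ is then Zariski dense in $\PO(n,2)$. I will show that the $\cC^1$ hypothesis forbids $\G$ from containing any regular element, which is incompatible with Zariski density.

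First I would make precise what the $\cC^1$ hypothesis gives at the level of Jordan projections. For $\SO(n,2)$ the Cartan subspace is two dimensional, and the constraints $\lambda_3(\g)=\dots=\lambda_n(\g)=0$ and $\lambda_1(\g)+\lambda_{n+2}(\g)=\lambda_2(\g)+\lambda_{n+1}(\g)=0$ recalled above show that the Jordan projection of $\g$ is determined by the pair $(\lambda_1(\g),\lambda_2(\g))$, which lies in the closed Weyl chamber $\{\lambda_1\geq\lambda_2\geq 0\}$. Its two walls are $\{\lambda_1=\lambda_2\}$ and $\{\lambda_2=0\}$: an element is proximal exactly when it is off the first wall ($\lambda_1>\lambda_2$), and, as noted after the embedding of $\rO(n,1)$ into $\PO(n,2)$, a proximal element is conjugate to an element of $\rO(n,1)$ precisely when it sits on the second wall ($\lambda_2=0$). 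Since $\lambda_2$ is conjugation invariant, Lemma~\ref{lem - conjugaison elements} then says that every proximal $\g\in\G$ satisfies $\lambda_2(\g)=0$. Equivalently, $\G$ contains no element whose Jordan projection lies in the open chamber $\{\lambda_1>\lambda_2>0\}$, i.e. no $\R$-regular (loxodromic) element.

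The last step is to contradict this with Benoist's theorem: the limit cone of a Zariski-dense subgroup of a semisimple Lie group is convex with nonempty interior, and in particular such a subgroup contains $\R$-regular elements, whose Jordan projection lies in the interior of the Weyl chamber. Applied to our (by assumption) Zariski-dense $\G$, this produces $\g\in\G$ with $\lambda_1(\g)>\lambda_2(\g)>0$; such a $\g$ is proximal yet has $\lambda_2(\g)>0$, contradicting the previous paragraph. Hence $\G$ is not Zariski dense, and Proposition~\ref{prop - fuchsien ou zariski dense} forces $\G$ to be $\AdS$-Fuchsian, that is, Fuchsian.

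The only nontrivial input, and hence the main obstacle, is the correct use of Benoist's theorem. It would be a mistake to argue merely that the proximal projections lie on the wall $\{\lambda_2=0\}$ while the remaining elements could lie on $\{\lambda_1=\lambda_2\}$, since those two walls together span the whole chamber; the decisive point is precisely that Zariski density produces a genuinely regular element, which is automatically proximal and off both walls. One should also check that the statement is being applied in the relevant rank-two setting, the low-dimensional exceptions to semisimplicity or irreducibility being already handled inside the proof of Proposition~\ref{prop - fuchsien ou zariski dense}, and record that ``Fuchsian'' is understood here as ``$\AdS$-Fuchsian,'' so that the Proposition indeed closes the argument.
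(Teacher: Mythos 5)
Your proof is correct and is essentially the paper's own argument: Lemma~\ref{lem - conjugaison elements} forces $\lambda_2(\g)=0$ for every proximal $\g\in\G$, Benoist's theorem makes this incompatible with Zariski density, and Proposition~\ref{prop - fuchsien ou zariski dense} closes the argument; the only difference is presentational, in that you run it by contradiction (Zariski density produces an $\R$-regular element in $\G$) where the paper argues directly that the asymptotic cone of $\G$ has empty interior and is therefore not Zariski dense. One small remark: the argument you label ``a mistake'' is in fact valid and is exactly the paper's one-line proof --- Benoist's limit cone is the smallest closed \emph{cone} (not convex cone) containing the Jordan projections, so containment in the union of the two walls $\{\lambda_2=0\}\cup\{\lambda_1=\lambda_2\}$ already forces empty interior, contradicting Benoist's conclusion that for Zariski dense groups this cone is convex with nonempty interior.
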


\begin{proof} By Lemma \ref{lem - conjugaison elements}, the Jordan projections of proximal elements of  $\G$ all lie in a half line in a  Weyl chamber $\mathfrak a^+$ of $\PO(n,2)$, therefore its asymptotic cone has empty interior in $\mathfrak a^+$. Benoist's Theorem \cite{benoist1997asymptotiques} implies $\G$ is not Zariski dense. Proposition \ref{prop - fuchsien ou zariski dense intro} implies that $\G$ is Fuchsian.
\end{proof}

~\\
\footnotesize \textsc{}\\
 \emph{E-mail address:}  \verb|olivier.glrx@gmail.com|
~\\
\footnotesize \textsc{Lycée Chaptal, 45 Bd des Batignolles, 75008 Paris, France}\\
 \emph{E-mail address:}  \verb|daniel.monclair@universite-paris-saclay.fr|
 ~\\
 \footnotesize \textsc{Institut de Mathématique d'Orsay, Bâtiment 307, Université Paris-Saclay, F-91405 Orsay Cedex, France}\\

\end{document}